\newcommand{\N}{\mathbb N}
\newcommand{\Z}{\mathbb Z}
\newcommand{\Q}{\mathbb Q}
\newcommand{\R}{\mathbb R}
\newtheorem{theoremloc}{Theorem}[section]
\newtheorem{propositionloc}[theoremloc]{Proposition}
\newtheorem{lemmaloc}[theoremloc]{Lemma}
\newtheorem{corollaryloc}[theoremloc]{Corollary}
\newtheorem{factloc}[theoremloc]{Fact}
\newtheorem*{theoremlocstar1}{Theorem 2.7}
\newtheorem*{theoremlocstar2}{Theorem 6.1}
\newtheorem*{remarklocstar}{Remark}
\DeclareMathOperator*{\Time}{\mathcal{T}}
\DeclareMathOperator*{\Space}{\mathcal{S}}
\newcommand{\F}{\mathcal F}
\newcommand{\sss}{S^1}
\newcommand{\ie}{i.e.\ }
\title{Pseudo-Riemannian Geodesic Foliations by Circles}
\author{Pierre Mounoud  \\
	Univ. Bordeaux, IMB, UMR 5251\\ 
	F-33400 Talence, France  \\
	{\it email: pierre.mounoud@math.u-bordeaux1.fr}  \\
	\and 
	Stefan Suhr  \\
	Fachbereich Mathematik, Universit\"at Hamburg\\ 
	Bundesstra\ss e 55\\ 
	20146 Hamburg, Germany \\
	{\it email: stefan.suhr@math.uni-hamburg.de} \\
	}
\date{}
\begin{document}

%

\maketitle

\begin{abstract}
We investigate under which assumptions an orientable pseudo-Riemannian geodesic foliations by circles is generated by an $S^1$-action. We construct 
examples showing that, contrary to the Riemannian case, it is not always true. However, we prove that such an action always exists when the foliation does not 
contain lightlike leaves, i.e. a pseudo-Riemannian Wadsley's  Theorem. As an application, we show that every Lorentzian surface all of whose spacelike/timelike 
geodesics are closed, is finitely covered by $S^1\times \R$. It follows that every Lorentzian surface contains a non-closed geodesic.
\end{abstract}

\section{Introduction}\label{last}

Wadsley's   Theorem states that a foliation by circles is a foliation by closed geodesics of some Riemannian metric if and only if the circles have locally 
bounded length. Besides its significance in its own right, Wadsley's   Theorem is a cornerstone in the study of Riemannian manifolds all of whose geodesics 
are closed. See \cite{besse1} for an overview of the subject.

The year following Wadsley's article (\cite{Wadsley}), Sullivan  gave in \cite{Sullivan} the first examples of compact manifolds endowed with a foliation by circles of 
unbounded length. Sullivan's article contains also a modification of his construction by Thurston. Thurston's example has the additional properties of being 
real-analytic and explicitly given.

Wadsley's Theorem implies that there are no Riemannian metrics making Sullivan's or Thurston's examples geodesic. Of course, there exists connections 
making them geodesic (it is true for any foliation). But are there interesting ones? For example, we could wonder, as  Epstein in \cite{besse1}, if there is a flat 
connection making any of them geodesic.

In this paper, we will focus on pseudo-Riemannian connections. First we prove an  analog of Wadsley's  Theorem for geodesic foliations that do not 
contain any lightlike leaves (see  Theorem \ref{T1.1}). Hence, a geodesic foliation by circles of unbounded length has to have lightlike leaves. With the help 
of Thurston's example, we prove:

\begin{theoremloc} 
There exists a smooth foliation by circles of unbounded length, denoted $\F$, on a smooth compact  pseudo-Riemannian manifold $(M,g)$ such that the leaves of $\F$ are geodesics of $g$. The foliation $\F$ and the metric  $g$ can be chosen so that all the leaves of $\F$ are lightlike or  so that there exist 
leaves of $\F$ of any type. Moreover, in the lightlike case $\F$ and $g$ can be chosen to be real-analytic.
\end{theoremloc}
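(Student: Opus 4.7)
The plan is to construct both examples on the $4$-manifold $M := N \times \sss$, where $(N, \F_0)$ is Thurston's smooth (real-analytic) foliation by circles of unbounded length on a compact $3$-manifold with non-vanishing tangent vector field $X$ (see \cite{Sullivan}). Viewed on the first factor of $M$, the field $X$ generates a foliation $\F$ of $M$ by circles of unbounded length, each leaf lying in a slice $N \times \{\theta_0\}$, where $\theta$ denotes the coordinate on $\sss$.

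For the lightlike case, the key observation is that $[X, \partial_\theta] = 0$. By Grauert's theorem pick a real-analytic Riemannian metric $h_0$ on $N$, and set $\alpha := h_0(X,\cdot)/h_0(X,X)$, so that $\alpha(X) \equiv 1$; let $H := \ker \alpha$ and let $h$ denote the degenerate symmetric $(0,2)$-tensor on $N$ equal to $h_0$ on $H$ and vanishing on $\R X$. Define on $M$
\[
g := h + d\theta \otimes \alpha + \alpha \otimes d\theta.
\]
In the frame $(X, \partial_\theta, Y_1, Y_2)$ with $(Y_1, Y_2)$ an $h$-orthonormal basis of $H$, the Gram matrix of $g$ is block-diagonal with blocks $\bigl(\begin{smallmatrix}0&1\\1&0\end{smallmatrix}\bigr)$ and $\mathrm{Id}_2$, so $g$ is Lorentzian of signature $(3,1)$ with $g(X,X) = 0$. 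A direct Koszul computation, using $[X, \partial_\theta] = 0$, $\alpha|_H = 0$ and $h(X, \cdot) = 0$, gives $\nabla_X X = 0$. Hence $\F$ is a foliation by lightlike geodesics of unbounded length, and the whole construction is real-analytic, which yields the last sentence of the theorem.

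For the mixed case, I would first try the $\theta$-dependent perturbation
\[
g' := g + \phi(\theta) \, \alpha \otimes \alpha,
\]
with $\phi \in C^\infty(\sss, \R)$ taking both signs and vanishing on a smaller subset. Then $g'$ is still Lorentzian and $g'(X, X) = \phi(\theta)$, so the leaves of $\F$ on $N \times \{\theta_0\}$ are spacelike, lightlike, or timelike according to the sign of $\phi(\theta_0)$. The pregeodesic condition is the main obstacle: Koszul now gives $g'(\nabla_X X, Z) = -\phi(\theta)\, \alpha([X, Z])$ for $Z \in H$, so $X$ is pregeodesic only if $\alpha([X, Z]) \equiv 0$ wherever $\phi \neq 0$, \ie $H$ must be flow-invariant there. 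On Thurston's $N$ no globally $X$-invariant $\alpha$ with $\alpha(X) \equiv 1$ exists, since such a form would upgrade $\F_0$ to an $\sss$-action and contradict unboundedness of the periods. I would resolve this by changing the base: either glue Thurston's piece to a Seifert piece so as to obtain a compact $3$-manifold carrying both the unbounded-length circle foliation and a flow-invariant transversal distribution, or localize the support of $\phi$ inside a region of $N$ where an invariant transversal can be exhibited explicitly. Once a suitable $\alpha$ is available, the Koszul argument of the lightlike case closes the proof.
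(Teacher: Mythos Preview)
Your starting hypothesis is wrong: there is no compact $3$-manifold carrying a foliation by circles of unbounded length. This is Epstein's theorem, cited in the paper as \cite{Epstein} and used in Corollary~2.5. Thurston's example (Section~\ref{TSE}) lives on the $5$-manifold $N\times\sss\times\sss$ with $N$ the Heisenberg nilmanifold, so your $4$-manifold $M=N\times\sss$ does not exist. This can be repaired by working on the correct $5$-dimensional base.

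Once the dimension is fixed, your lightlike construction is essentially the paper's. Both arrange that $X^\flat$ is a closed $1$-form: you add an extra $\sss$ and force $X^\flat=d\theta$, while the paper uses the already-present coordinate $u$ on Thurston's manifold and the frame $(X,\partial_u,V_1,V_2,2\partial_t+\partial_z)$ to get $X^\flat=du$. From closedness of $X^\flat$ the identity $0=dX^\flat(X,Z)=g(D_XX,Z)$ gives $D_XX=0$; your Koszul computation is this in slightly different packaging. The paper's version is cleaner in that it does not enlarge the manifold.

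The type-changing case, however, is not proved. You correctly isolate the obstruction: for $g'=g+\phi(\theta)\,\alpha\otimes\alpha$ to make $X$ pregeodesic one needs $\alpha([X,Z])=0$ for $Z\in H$, i.e.\ $H$ flow-invariant, and you correctly observe that a globally $X$-invariant $\alpha$ with $\alpha(X)\equiv1$ would contradict unboundedness via Wadsley. But the proposed fixes (``glue to a Seifert piece'', ``localize the support of $\phi$'') are not carried out and it is unclear either can be made to work while keeping all leaves closed and the lengths unbounded. The paper takes a genuinely different route: it writes down an explicit metric matrix $G_0$ of signature $(3,2)$ near the bad set, chosen so that the $g$-orthogonal of $X_\xi$ is spanned by four concrete vector fields whose brackets with $W_\xi$ remain in that span (Fact~\ref{crochets}); then it extends $g$ away from the bad set using only the connectedness of $\operatorname{SL}(4,\R)/\operatorname{SO}(2,2)$ and $\operatorname{SL}(4,\R)/\operatorname{SO}(3,1)$. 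The point you are missing is that the flow-invariance condition is imposed on the \emph{metric-orthogonal} distribution, and one has enough freedom in the metric near the bad set to force this, without ever producing a globally invariant $1$-form.
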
 

The type changing situation is perhaps the more interesting. To begin with, no example of type changing geodesic foliations by circles was known to us. Moreover, this case 
is related to  the existence of pseudo-Riemannian manifolds all of whose geodesics are closed. Indeed, if $(M,g)$ is a pseudo-Riemannian manifold, then there 
exists  a pseudo-Riemannian metric $\bar g$ (called the Sasaki metric) on its  tangent bundle $TM$ such that the orbits of the  geodesic flow of $(M,g)$ are geodesics of $\bar g$ (see 
 Proposition \ref{P2.1}). If we were able to find a non-Riemannian manifold $(M,g)$ so that all its geodesics are closed, then $(TM,\bar g)$ would have a type changing 
geodesic foliation by circles.

Furthermore, it is interesting to note that type changing geodesic foliations satisfy what could be called an ``anti-Wadsley's  Theorem''. By this we mean that a type changing 
geodesic foliation is never generated by an action of $\sss$ (see Proposition \ref{period}). Applying this result to geodesic flows gives:

\begin{theoremlocstar1}
A pseudo-Riemannian  manifold having a geodesic flow that can be periodically reparametrized is  Riemannian or anti-Riemannian.
\end{theoremlocstar1}
Hence, if there exists a (non-Riemannian) pseudo-Riemannian manifold all of whose geodesics are closed it has to be quite complicated. 

In opposition, there exists a well-known family of pseudo-Riemannian manifolds all of whose spacelike geodesics are closed. It is the  family of
pseudo-spheres $(r>0)$:
$$S^n_\nu(r):=\left\{x\in\R^{n+1}\big|\; \langle x,x \rangle_\nu= -\sum_{i=1}^\nu (x^i)^2 +\sum_{i=\nu+1}^{n+1} (x^i)^2 =r^2\right\}$$
It is elementary to see that all spacelike geodesics of $S^n_\nu(r)$ are closed. By inverting all signs these examples can be modified to yield examples of 
pseudo-Riemannian manifolds all of whose timelike geodesics are closed. Note that $S^n_\nu(r)$ is diffeomorphic to $\R^\nu\times S^{n-\nu}$, where $S^{n-\nu}$ is the 
(Euclidean) unit sphere of dimension $n-\nu$. There also exists a classical family of pseudo-Riemannian metrics all of whose lightlike geodesics are closed. This is the  family of
metrics in the conformal class of $S^q\times S^p$ endowed with the metric $g_{S^q}-g_{S^p}$ (when $p=1$ it is called the $q+1$-Einstein's universe). 

Our main result on this problem  is the following (compare with \cite{GroGro}):
\begin{theoremlocstar2}
If $(M,g)$ is a  Lorentzian $2$-manifold all of whose timelike (respectively spacelike) geodesics are closed, then $(M,g)$ is finitely covered by a timelike-SC-metric 
(resp. spacelike SC-metric)  on $S^1\times\R$. 
\end{theoremlocstar2}

It follows that every Lorentzian surface contains non-closed geodesics (Theorem \ref{nozollsurface}). Finally we extend a theorem of 
Guillemin (see \cite{guill1} p.4) concerning compact surfaces, stating that, up to a finite cover, any Lorentzian surface all of whose lightlike geodesics are closed
is in the conformal class of the $2$-dimensional Einstein universe.

\subsection*{Notation}
Throughout the entire text we will assume that $M$ is a connected smooth (i.e. $C^\infty$) manifold. Further ``smooth'' is synonymous for $C^\infty$.

\section{Wadsley's Theorem for pseudo-Riemannian Manifolds}\label{sec1}

Let $\F$ be a smooth foliation by circles of $M$. We can give the following generalization of Wadsley's  Theorem as formulated in \cite{besse1}:

\begin{theoremloc}\label{T1.1}
The following conditions are equivalent:
\begin{itemize}
\item[(1)] There is a smooth pseudo-Riemannian metric making $\F$ a geodesic foliation by non-degenerate geodesics of the same causal character, i.e. 
$g(\dot\gamma,\dot\gamma)>0$ or $g(\dot\gamma,\dot\gamma)<0$ for all geodesics $\gamma$ parameterizing leaves of $\F$. 
\item[(2)] For any compact subset $K$ of $M$, the circles meeting $K$ have bounded length with respect to some (hence every) Riemannian metric.
\item[(3)] Let $\widetilde{M}$ be the double cover of $M$ obtained by taking the two different possible local orientations of the leaves. There is a smooth action 
of the orthogonal group O$(2)$ on $\widetilde{M}$ and the non-trivial deck transformation $\sigma\colon \widetilde{M}\to\widetilde{M}$ is an element of the 
non-trivial component of O$(2)$. Each orbit under the O$(2)$-action consists of two components and each component is mapped diffeomorphically onto a leaf of  $\mathcal{F}$ by the covering projection.
\end{itemize}
\end{theoremloc}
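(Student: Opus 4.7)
The plan is to notice that $(2) \Leftrightarrow (3)$ is the classical Wadsley theorem (as stated in \cite{besse1}) applied to the underlying smooth foliation by circles $\mathcal{F}$; so the new content is entirely in proving $(3) \Rightarrow (1)$ and $(1) \Rightarrow (2)$.

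For $(3) \Rightarrow (1)$, I would work upstairs on the double cover $\widetilde M$ and build the pseudo-Riemannian metric by an averaging-and-sign-flipping procedure. First, averaging an arbitrary Riemannian metric over the compact group $O(2)$ produces an $O(2)$-invariant Riemannian metric $h_0$. The infinitesimal generator $X$ of the $\sss$-subgroup is then Killing for $h_0$, so $h_0(X,X)$ is constant along $X$-orbits; since the reflection component of $O(2)$ sends $X$ to $\pm X$, the function $h_0(X,X)$ is in fact $O(2)$-invariant. Rescale $h_0$ only on the line field $T\widetilde{\mathcal{F}}$ by the factor $1/h_0(X,X)$ to obtain an $O(2)$-invariant Riemannian metric $h$ with $h(X,X)\equiv 1$. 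The Killing identity
\begin{equation*}
h(\nabla^h_X X, Y) = -h(\nabla^h_Y X, X) = -\tfrac{1}{2}\, Y\cdot h(X,X) = 0
\end{equation*}
then gives $\nabla^h_X X = 0$. Finally, set $g := -h|_{T\widetilde{\mathcal{F}}} + h|_{(T\widetilde{\mathcal{F}})^{\perp_h}}$: this pseudo-Riemannian metric is $O(2)$-invariant (the $h$-orthogonal splitting is), has $g(X,X)\equiv -1$, and the same Killing calculation performed with $g$ in place of $h$ shows $\nabla^g_X X = 0$. So the orbits of $X$ are $g$-timelike geodesics, and since $g$ is $\sigma$-invariant it descends to $M$ and realises $(1)$.

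For $(1) \Rightarrow (2)$, I would adapt Wadsley's compactness argument to the pseudo-Riemannian setting. The crucial observation is that, when $g|_{T\mathcal{F}}$ is non-degenerate of fixed causal type $\epsilon$, the subset
\begin{equation*}
U := \{v \in T\mathcal{F} : g(v,v) = \epsilon\}\subset TM
\end{equation*}
is a smooth double cover of $M$, invariant under the $g$-geodesic flow (since $\mathcal{F}$ is geodesic and the flow preserves $g(v,v)$); the restricted flow is smooth, its orbits are embedded circles, and the period of each orbit equals the $g$-arclength of the underlying leaf. The goal is then to show that the period function on $U$ is locally bounded, by imitating the transversal/first-return-time argument of the classical Wadsley proof applied to this pseudo-Riemannian unit-speed flow. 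Once this is done, the preimage in $U$ of any compact $K\subset M$ is compact and the period is bounded on it; comparing a $g$-unit and a Riemannian-unit vector field on the line bundle $T\mathcal{F}$ (a smooth ratio, bounded on $K$) transfers the bound to the Riemannian length and yields $(2)$.

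The main obstacle will be establishing the local boundedness of the period function in the step $(1) \Rightarrow (2)$. Sullivan's and Thurston's examples show that locally bounded periods are not automatic for general smooth flows by circles on compact manifolds, so the pseudo-Riemannian hypothesis must be used essentially here. The crucial input is the canonical smooth unit-speed parametrisation of every leaf provided by non-degeneracy of $g|_{T\mathcal{F}}$ -- precisely what disappears in the lightlike case and what enables the transversal/return-time argument to transfer from the Riemannian proof.
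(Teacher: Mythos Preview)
Your plan for $(3)\Rightarrow(1)$ is sound, though the sign-flip is superfluous: once you have the Riemannian $h$ with $\nabla^h_X X=0$ you already have an instance of (1), since Riemannian metrics are pseudo-Riemannian and all leaves are spacelike. The real issue is $(1)\Rightarrow(2)$. You correctly locate the obstacle and correctly identify the unit-speed flow on $U$ as the right object, but ``imitating the transversal/first-return-time argument of the classical Wadsley proof'' is not a step one can simply declare. Wadsley's $(1)\Rightarrow(2)$ hinges on the fact that the holonomy of a Riemannian-geodesic circle foliation acts by isometries of the induced metric on a normal disk, hence lies in the \emph{compact} group $O(n-1)$; this is what forces the holonomy groups to be finite and the periods to be locally bounded. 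In the pseudo-Riemannian situation the same argument only places the linear holonomy in $O(p,q)$, which is noncompact, so finiteness (and hence boundedness of periods) does not follow. Your proposal does not explain how to get around this, and your closing remark that ``the unit-speed parametrisation \ldots enables the transversal/return-time argument to transfer'' is precisely the assertion that needs proof.

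The paper avoids this difficulty entirely by reducing (1) to its Riemannian special case before invoking Wadsley. Concretely, given a pseudo-Riemannian $g$ with $\mathcal F$ geodesic and nowhere lightlike, one uses the $g$-orthogonal splitting $TM=T\mathcal F\oplus T\mathcal F^{\perp_g}$ (well defined because $g|_{T\mathcal F}$ is non-degenerate) together with an arbitrary auxiliary Riemannian metric $h_0$ to set
\[
h:=h_0(P_{\mathcal F}\,\cdot\,,P_{\mathcal F}\,\cdot\,)+g(\overline X,\cdot)\otimes g(\overline X,\cdot),
\]
where $P_{\mathcal F}$ is the $g$-orthogonal projection onto $T\mathcal F^{\perp_g}$ and $\overline X$ is a local $g$-unit section of $T\mathcal F$. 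A short Koszul computation (using that the $g$- and $h$-orthogonal complements of $T\mathcal F$ coincide and that $g([X,Z],X)=0$ for $Z\perp_g X$) shows $\mathcal F$ is $h$-geodesic. One then applies the classical Riemannian Wadsley theorem verbatim to obtain $(2)$ and $(3)$. This is both shorter and bypasses the noncompact-holonomy obstruction; you could patch your argument by inserting exactly this construction in place of the appeal to a direct pseudo-Riemannian imitation.
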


 This formulation of Wadsley's Theorem differs from the formulation in \cite{besse1} only in the replacement of ``Riemannian'' by ``pseudo-Riemannian'' in part
(1) and the condition that all geodesics are nonlightlike. Hence the theorem will follow directly from Wadsley's Theorem as formulated in \cite{besse1} and the 
following proposition. 

\begin{propositionloc}\label{idem}
Let $\mathcal{G}$ be a $1$-foliation of a manifold $M$. Then the following assertion are equivalent:
\begin{itemize}
\item[(1)] There exists a pseudo-Riemannian metric $g$ on $M$ turning $\mathcal{G}$ into a $g$-geodesic foliation by non lightlike geodesics.
\item[(2)] There exists a Riemannian metric $h$ on $M$ turning $\mathcal{G}$ into a $h$-geodesic foliation.
\end{itemize}
\end{propositionloc}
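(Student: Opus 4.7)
The direction (2) $\Rightarrow$ (1) is immediate, since any Riemannian metric is pseudo-Riemannian and its geodesics $\gamma$ satisfy $h(\dot\gamma,\dot\gamma)>0$. For the converse (1) $\Rightarrow$ (2), my first step will be to observe that the ``non-lightlike'' hypothesis forces a \emph{uniform} causal character on all leaves: along any $g$-geodesic $\gamma$ the function $g(\dot\gamma,\dot\gamma)$ is constant, so each leaf has a well-defined sign; the loci of spacelike and timelike leaves are both open (pick a local nonvanishing section $X$ of $L:=T\mathcal{G}$ and look at the sign of $g(X,X)$); and since $M$ is connected and no leaf is lightlike, there is a single $\epsilon\in\{+1,-1\}$ with $\epsilon\,g|_L>0$ on all of $M$.

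The main construction will then be to build a Riemannian metric $h$ that keeps the same orthogonal distribution as $g$. Because $g|_L$ is nondegenerate, the $g$-orthogonal decomposition $TM=L\oplus L^{\perp_g}$ is globally smooth. Picking any auxiliary Riemannian metric $h_0$ on $M$ (partition of unity), I set
\[
h|_L := \epsilon\, g|_L, \qquad h|_{L^{\perp_g}} := h_0|_{L^{\perp_g}},
\]
and declare $L$ and $L^{\perp_g}$ to be $h$-orthogonal. Each summand is positive definite, so $h$ is a smooth Riemannian metric, and by construction $L^{\perp_h}=L^{\perp_g}$.

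Finally I will check that $\mathcal{G}$ is $h$-geodesic, that is $\nabla^h_X X\in L$ for local sections $X$ of $L$. Applying the Koszul formula to $h$ with a test vector $Z\in L^{\perp_h}=L^{\perp_g}$ and using $h(X,Z)=0$ gives
\[
2\,h(\nabla^h_X X, Z) = -Z\,h(X,X) - 2\,h([X,Z],X).
\]
Since $h|_L=\epsilon\,g|_L$ and since the $L^{\perp_g}$-component of $[X,Z]$ is both $g$- and $h$-orthogonal to $X$, each term on the right equals $\epsilon$ times its counterpart in the Koszul formula for $g$. Thus the right-hand side is $2\epsilon\, g(\nabla^g_X X, Z)$, which vanishes by $g$-geodesicity of $\mathcal{G}$; hence $\nabla^h_X X\in L$. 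This verification is the only real computation; the underlying conceptual point is that the non-lightlike hypothesis produces a canonical transverse distribution $L^{\perp_g}$ which can be reused verbatim as $L^{\perp_h}$, so the Koszul formulas for $g$ and $h$ agree up to an overall sign. Without non-lightlikeness the splitting would degenerate, which is consistent with (and explains the necessity of) the type-changing and lightlike examples announced in the introduction.
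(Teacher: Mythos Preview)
Your proof is correct and follows essentially the same route as the paper: both build the Riemannian metric $h$ block-diagonally with respect to the splitting $TM=L\oplus L^{\perp_g}$, taking $h|_L=|g|_L$ and $h|_{L^{\perp_g}}$ an auxiliary Riemannian metric, and then verify geodesicity via the Koszul formula by checking that $h([X,Z],X)=\epsilon\,g([X,Z],X)$ for $Z\in L^{\perp_g}$. The only cosmetic differences are that the paper writes the $L$-part as $g(\overline{X},\cdot)\otimes g(\overline{X},\cdot)$ for a local $g$-unit section $\overline{X}$ (which automatically gives $|g|_L$ without first fixing a global sign $\epsilon$), and the paper carries out the Koszul computation for a unit $X$ (getting $\nabla^h_X X=0$) rather than an arbitrary section (getting $\nabla^h_X X\in L$).
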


\begin{proof}
(1) $\Rightarrow$ (2): Consider the $g$-orthogonal complement $\mathcal{G}^\perp$ of $T\mathcal{G}$, the bundle of tangent spaces to the leaves of $\mathcal{G}$, and 
the orthogonal projection $P_\mathcal{G}\colon TM\to \mathcal{G}^\perp$. Note that both $\mathcal{G}^\perp$ and $P_\mathcal{G}$ are globally well defined, no 
matter if $\mathcal{G}$ is orientable or not. Choose any Riemannian metric $h_0$ on $M$ and set
$$h:= h_0(P_\mathcal{G}.,P_\mathcal{G}.)+g(\overline{X},.)\otimes g(\overline{X},.)$$
where $\overline{X}$ is any choice of $g$-unit tangent vector in $T\mathcal{G}$, i.e. $g(\overline{X},\overline{X})=\pm 1$. Since we assumed that all leaves of 
$\mathcal{G}$ are nonlightlike geodesics, it follows that $g|_{T\mathcal{G}\times T\mathcal{G}}\neq 0$. Consequently 
$\mathcal{G}^\perp$ is transversal to $T\mathcal{G}$ and $h$ is a Riemannian metric.

Let $U\subseteq M$ be open such that $\mathcal{G}|_U$ is orientable. Choose a $g$-unit vector field $X\in \Gamma(TU)$ tangent to 
$\mathcal{G}$. Since the flowlines of $X$ are $g$-geodesics we have with Koszul's formula
$$0=g(\nabla_X X,Z)=2X(g(X,Z))-Z(g(X,X))+g([X,X],Z)-2g([X,Z],X)=-2g([X,Z],X)$$
if $Z$ is orthogonal to $X$ everywhere.

It is obvious that $X$ is an $h$-unit vector field and the orthogonal complement of $T\mathcal{G}$ w.r.t. $g$ and $h$ coincide. For the Levi-Civita connection 
$\nabla^h$ of $h$ we have 
\begin{align*}
h(\nabla^h_X X,Z)&=2X(h(X,Z))-Z(h(X,X))+h([X,X],Z)-2h([X,Z],X)\\
&=-2h([X,Z],X)=h_0(P_\mathcal{G}[X,Z],P_\mathcal{G} X)+g([X,Z],X)g(X,X)=0
\end{align*}
if $Z$ is orthogonal to $X$ everywhere. Since by construction $h(X,X)\equiv 1$ it follows that $\nabla^h_X X\perp X$ w.r.t. $h$. This implies that 
$\nabla^h_X X=0$ and $\mathcal{G}$ is $h$-geodesic. 

(2) $\Rightarrow$ (1): Since we consider Riemannian metrics as pseudo-Riemannian as well, the claim is immediate.
\end{proof}

 We will need the following refined version of one implication in Wadsley's Theorem. It follows from Proposition \ref{idem} and Theorem A.26 p.\@ 220 from 
 \cite{besse1}.
\begin{theoremloc}\label{T2}
Let $(M,g)$ be a pseudo-Riemannian manifold with a smooth unit (i.e. $g(X,X)=\pm 1$) vector field $X$ such that every flowline is a geodesic circle. Then 
$X$ is the derivative of a smooth circle action on $M$ (i.\@e.\@ its flow is periodic). 
\end{theoremloc}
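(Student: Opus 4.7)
The strategy is to reduce to the Riemannian case and then invoke the refined Wadsley theorem already available in the literature. The two ingredients, Proposition \ref{idem} and Theorem A.26 of \cite{besse1}, essentially do all the work; the point of the argument is to verify that they fit together cleanly in the presence of a globally defined $g$-unit vector field.

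First I would produce a Riemannian metric $h$ on $M$ in which $X$ itself is a unit vector field whose flowlines are geodesics. Let $\mathcal{G}$ denote the $1$-foliation generated by $X$. Proposition \ref{idem} already yields some Riemannian metric making $\mathcal{G}$ geodesic, but a mild check is needed to ensure $X$ itself becomes $h$-unit. Inspecting the proof of Proposition \ref{idem}, I would take $h_0$ to be any auxiliary Riemannian metric on $M$ and set
\[
h := h_0(P_\mathcal{G}\cdot,P_\mathcal{G}\cdot) + g(X,\cdot)\otimes g(X,\cdot),
\]
using the globally defined $X$ in place of the locally chosen $\overline{X}$ from that proof. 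Since $X$ lies in $T\mathcal{G}$ we have $P_\mathcal{G}X = 0$, and since $g(X,X)^2 = 1$ we obtain $h(X,X)\equiv 1$. The Koszul-formula computation in the proof of Proposition \ref{idem} then goes through verbatim and shows that the flowlines of $X$ remain geodesics of $h$.

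Having transferred the data to a Riemannian setting, I am now in the following situation: $X$ is an $h$-unit vector field on the Riemannian manifold $(M,h)$ whose integral curves are all geodesic circles. This is exactly the hypothesis of Theorem A.26 of \cite{besse1}, which asserts that in this Riemannian setting the flow of $X$ is periodic. Combining the two yields the desired conclusion.

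The argument has no genuine obstacle; the single point requiring attention is verifying that the construction in Proposition \ref{idem} is compatible with the globally given $X$, so that $h$ can be arranged to make $X$ itself unit. This compatibility uses only the global existence of $X$ and the fact that $g(X,X) = \pm 1$ nowhere vanishes, both of which are part of the hypotheses. Everything else is black-boxed into the two cited results.
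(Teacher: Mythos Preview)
Your proposal is correct and follows exactly the approach the paper indicates: reduce to the Riemannian case via Proposition \ref{idem} and then invoke Theorem A.26 of \cite{besse1}. Your additional care in checking that the metric $h$ from Proposition \ref{idem} can be arranged to make the globally given $X$ itself $h$-unit is a worthwhile clarification, since the paper only sketches that the theorem ``follows from'' these two ingredients without spelling out this compatibility.
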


As we said in the introduction, not only the leaves of a type changing geodesic foliation by circles can have unbounded length but they have to. 
It is actually a  consequence of the results of Wadsley (\cite{Wadsley}) and  Theorem \ref{T1.1}.

\begin{propositionloc}\label{period}
Let $\F$ be an oriented $1$-dimensional geodesic foliation on a  pseudo-Riemannian manifold $(M,g)$. If the leaves of $\F$ are circles with locally bounded length then they all have the same type.
\end{propositionloc}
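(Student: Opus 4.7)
The plan is to invoke Wadsley's theorem to reduce $\F$ to the orbit foliation of a circle action, and then use the geodesic hypothesis via a careful rescaling to rigidify the causal type. In the oriented case, $(2)\Rightarrow(3)$ of Theorem \ref{T1.1} equips $M$ with a smooth $S^1$-action whose orbits are exactly the leaves of $\F$. Let $X$ be the infinitesimal generator, normalized so the action has period $2\pi$, and set $\psi:=g(X,X)$. The geodesic hypothesis gives $\nabla_X X=\lambda X$ for some smooth $\lambda$, hence $X(\psi)=2\lambda\psi$; in particular, on each leaf $\psi$ either keeps constant sign or vanishes identically (the latter precisely for lightlike leaves).

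The key invariant is the continuous function
\[
J(p)\;:=\;\int_0^{2\pi}\sqrt{|\psi(\phi_t(p))|}\,dt,
\]
where $\phi_t$ is the $X$-flow. By $2\pi$-periodicity, $J\colon M\to\R_{\geq 0}$ is constant along the leaves of $\F$ and vanishes on every lightlike leaf. To control $J$ on $U:=\{\psi\neq 0\}$, fix a connected component $C$ of $U$ and set $\tilde X:=X/\sqrt{|\psi|}$, a $g$-unit vector field. A brief computation---in which the non-geodesic part $\lambda X$ of $\nabla_X X$ cancels against the derivative of the rescaling factor---gives $\nabla_{\tilde X}\tilde X=0$. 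Thus $\tilde X$ is a unit geodesic vector field on $C$ whose flowlines are the geodesic circles of $\F|_C$, and Theorem \ref{T2} produces a smooth $S^1$-action on $C$ with some period $T_C>0$. Because the $X$- and $\tilde X$-actions share the same orbits and the same stabilizer orders $m(L)$, writing out the change of parameter yields $J(L)=m(L)\cdot(T_C/m(L))=T_C$, so $J|_C\equiv T_C$.

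Finally, since $M$ is second countable, $U$ has at most countably many components, and $J(M)\subseteq\{0\}\cup\{T_C:C\text{ a component of }U\}$ is a countable subset of $\R$. A continuous function on the connected space $M$ with countable image must be constant, so $J\equiv c$ for some $c\geq 0$. If $c=0$, every leaf is lightlike and we are done; if $c>0$, no leaf is lightlike, so $\psi$ is nowhere zero on $M$, and the sign of $\psi$---a continuous $\{+,-\}$-valued function on the connected manifold $M$---is constant, whence all leaves share the same causal type. The delicate step is the rigidity furnished by Theorem \ref{T2} on each component of $U$: without the circle-action statement for the rescaled $\tilde X$, $J$ could a priori take a continuum of values near a lightlike leaf and the countability argument would fail.
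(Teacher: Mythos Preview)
Your argument is correct and rests on the same key ingredients as the paper---Wadsley's theorem to produce a global $S^1$-action, the rescaling $\tilde X=X/\sqrt{|\psi|}$ to a $g$-unit geodesic field on the non-lightlike locus $U$, and Theorem~\ref{T2} on each component to obtain a second period $T_C$. The packaging, however, is different and somewhat cleaner. The paper introduces an auxiliary Riemannian metric $h$ with $h(Z,Z)=1$, works on a single component $U_0\subseteq U$, and defines $\nu(x)=\int_0^{T}\sqrt{h(Z_0,Z_0)}\,dt$ along the $Z_0$-flow; it then argues that $\nu$ is continuous, lies in $2\pi\Q$, is therefore constant, and blows up as $x\to\partial U_0$, forcing $\partial U_0=\emptyset$. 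Your globally defined $J(p)=\int_0^{2\pi}\sqrt{|g(X,X)|}\,dt$ along the $X$-flow dispenses with the auxiliary metric entirely and replaces the blow-up step by a countable-image/connectedness argument, which handles the lightlike case (all leaves lightlike) and the non-degenerate case uniformly. The two invariants are essentially reciprocal: $J(p)\cdot\nu(p)=2\pi T_C$ on $U_0$. One point you assert quickly is that the $X$- and $\tilde X$-actions share the same stabilizer orders; this is true (the order equals that of the holonomy group of the leaf, which is intrinsic to $\F$), but as it is exactly what makes $J|_C\equiv T_C$, a word of justification---or the equivalent remark that $J$ is continuous and equals $T_C$ on the open dense set of principal orbits---would strengthen the write-up. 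The paper glosses over the analogous point when asserting $n(x)$ is constant.
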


\begin{proof} 
For any $x\in M$ we denote by $F_x$ the leaf of $\F$ containing $x$. According to Theorem \ref{T1.1}, there exists a vector field 
$Z$ tangent  to $\F$  such that the flow of $Z$ is $2\pi$-periodic (\ie  the leaves of $\F$ are the orbits of an action of $\sss$).
We endow $M$ with an auxiliary Riemannian metric $h$, such that $h(Z,Z)=1$. Hence, for any $x\in M$, the length of $F_x$ divides $2\pi$.

Let $U=\{x\in M ,\, g(Z,Z)\neq 0\}$, it is an open \emph{saturated} (\ie a union of leaves of $\F$) subset of $M$. We can suppose that $U$ is not empty (otherwise 
there is nothing to prove). Let $Z_0$ be the vector field on $U$ defined by $Z_0=1/\sqrt{|g(Z,Z)|} \,Z$. Let $U_0$ be a connected component of $U$. According to  
 Theorem \ref{T2}, there exists $T>0$ such that $\phi$, the flow of $Z_0$,  is $T$-periodic on $U_0$. We define a function $\nu$ on $U_0$ by 
$$\nu(x)=\int_0^T \sqrt{h_{\phi_t(x)}(Z_0,Z_0)}dt.$$ 
For any $x\in U_0$, $\nu(x)$ is a multiple of the length of $F_x$, therefore there exist $n(x)\in \Q$ such that $\nu(x)=2\pi n(x)$. The function $\nu$ being 
continuous it follows that $n(x)$ is  constant on $U_0$ (actually it follows from  Theorem \ref{T2} that if $2\pi$ and $T$ are the least 
periods of the flows of $Z$ and 
$Z_0$ then $n(x)=1$). But when $x$ tends to a point of $\partial U_0$, $g(Z,Z)$ tends to $0$ uniformly on $F_x$ and therefore $\nu(x)$ tends to $+\infty$. Hence 
$\partial U_0=\emptyset$ \ie $U=M$ and $\F$ does not change type.
\end{proof}

Hence, the foliation constructed in section \ref{typechanging} is probably not far from being the simplest type changing geodesic foliation by circles on a compact 
manifold. However, it is perhaps possible to find a $4$-dimensional example using the example of Epstein and Vogt (see \cite{EV}). For lower dimensions, we 
have:
\begin{corollaryloc}
If $(M,g)$ is a $2$ or $3$ dimensional compact pseudo-Riemannian manifold and if $\F$ is a geodesic foliation by circles then the leaves of $\F$ do not 
change type.
\end{corollaryloc}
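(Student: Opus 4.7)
The plan is to reduce the statement to Proposition~\ref{period} by showing that, in dimensions $2$ and $3$, any foliation by circles on a compact manifold automatically has leaves of locally bounded length.

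First I would arrange that $\F$ is orientable. If it is not, I pass to the orientation double cover $\widetilde M$ from Theorem~\ref{T1.1}(3) and pull back both $g$ and $\F$; this produces a compact pseudo-Riemannian manifold with an oriented geodesic foliation by circles whose leaves change causal type if and only if those of $\F$ do. So without loss of generality $\F$ is oriented.

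The crucial input is then the dimension-specific fact that every foliation by circles of a compact manifold of dimension $\le 3$ has uniformly bounded leaf-length (with respect to any Riemannian metric). In dimension $2$, an oriented foliation by circles yields a nowhere-vanishing vector field tangent to the leaves, so $\chi(M)=0$ and $M$ is the torus or the Klein bottle; in either case the circle-foliation is easily seen to come from an $\sss$-action, so all leaves have bounded length. In dimension $3$ this is exactly Epstein's classical theorem: every $C^1$ foliation of a compact $3$-manifold by circles is a Seifert fibration, so the leaves have locally (hence globally) bounded length.

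With locally bounded leaf-length in hand, Proposition~\ref{period} applied to the oriented geodesic foliation $\F$ directly gives that all leaves share the same causal type, which is what is to be proven. The only real obstacle is invoking Epstein's theorem in dimension $3$; the dimension-$2$ case is elementary from the classification of surfaces, and the reduction to Proposition~\ref{period} once boundedness is known is immediate.
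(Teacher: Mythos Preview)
Your proposal is correct and follows essentially the same route as the paper: invoke Epstein's theorem to get locally bounded leaf-length, then apply Proposition~\ref{period}. You supply two details the paper leaves implicit---the reduction to an oriented foliation (needed since Proposition~\ref{period} assumes orientability) and the elementary treatment of the $2$-dimensional case---but the core argument is identical.
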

\begin{proof} 
It is an immediate consequence of  Proposition \ref{period} and Epstein's Theorem (see \cite{Epstein}) saying that the leaves of a foliation by circles 
on a compact $3$ dimensional manifold have bounded length.
\end{proof}

We have the following ``signature-rigidity''  Theorem:

\begin{theoremloc}\label{complique}
A pseudo-Riemannian  manifold $(M,g)$ having a geodesic flow that can be periodically reparametrized is  Riemannian or anti-Riemannian.
\end{theoremloc}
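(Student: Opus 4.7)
The plan is to lift the question from $(M,g)$ to the tangent bundle, where the geodesic flow becomes a $1$-dimensional geodesic foliation, and then invoke Proposition \ref{period}. First I would apply Proposition \ref{P2.1} to obtain a pseudo-Riemannian ``Sasaki-type'' metric $\bar g$ on $TM$ whose geodesics include the orbits of the geodesic spray $G$ of $(M,g)$. Restricting to $TM\setminus\{0\}$ (which is connected as soon as $\dim M\geq 2$; the one-dimensional case is trivial, since any non-degenerate bilinear form on a line is automatically definite), the orbits of the geodesic flow form an oriented $1$-dimensional geodesic foliation $\F$ of $(TM\setminus\{0\},\bar g)$.

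The key geometric observation is that $\bar g$ is arranged so that
\[
\bar g(G_{(p,v)},G_{(p,v)}) \;=\; g_p(v,v),
\]
because $G_{(p,v)}$ is the horizontal lift of $v\in T_pM$ and the Sasaki-type construction makes the horizontal lift an isometry. Consequently, the $\bar g$-causal character of the leaf of $\F$ through $(p,v)$ equals the $g$-causal character of $v$. If $g$ were neither Riemannian nor anti-Riemannian, then at any $p\in M$ one could find $v_+,v_-\in T_pM$ with $g(v_+,v_+)>0$ and $g(v_-,v_-)<0$, yielding leaves of $\F$ of two different causal types. In particular $\F$ would be type-changing.

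The remaining ingredient is the hypothesis that the geodesic flow admits a periodic reparametrisation. I would interpret this as the existence of a smooth $S^1$-action on $TM$ whose orbits coincide with those of $G$. Then every leaf of $\F$ is a circle, and the orbit-length function relative to any auxiliary Riemannian metric on $TM\setminus\{0\}$ is continuous, hence locally bounded. Proposition \ref{period} then forces all leaves of $\F$ to share the same $\bar g$-causal character, contradicting the conclusion of the previous paragraph; thus $g$ must be Riemannian or anti-Riemannian. The only non-routine step I anticipate is verifying the identity $\bar g(G,G)=g(v,v)$ from the explicit construction of $\bar g$ in Proposition \ref{P2.1}; once that is in hand, everything else reduces cleanly to Proposition \ref{period}.
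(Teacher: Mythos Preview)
Your proposal is correct and follows essentially the same route as the paper: lift the geodesic flow to a $\bar g$-geodesic foliation on $TM$ via Proposition~\ref{P2.1}, observe that the causal character of each leaf agrees with that of the underlying vector, and apply Proposition~\ref{period} to derive a contradiction unless $g$ is definite. The identity $\bar g(G,G)=g(v,v)$ you flag as the only non-routine step is in fact part of the statement (and proof) of Proposition~\ref{P2.1}, which asserts that the tangent curve has the same causal character as the geodesic, so nothing further is needed there.
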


\begin{propositionloc}\label{P2.1}
Let $(M,g)$ be a pseudo-Riemannian manifold. Then there exists a pseudo-Riemannian metric $\overline{g}$ on $TM$ such that the tangent curve of every 
geodesic of $(M,g)$ is a geodesic of $(TM,\overline{g})$ of the same causal character.
\end{propositionloc}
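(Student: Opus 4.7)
The plan is to define $\overline g$ as the pseudo-Riemannian \emph{Sasaki metric} on $TM$, built from the Levi-Civita connection $\nabla$ of $g$. At every $(x,v)\in TM$, $\nabla$ induces a splitting
\[
T_{(x,v)}TM = H_{(x,v)} \oplus V_{(x,v)}
\]
into horizontal and vertical subspaces, where $V_{(x,v)} = \ker d\pi_{(x,v)}$ (canonically identified with $T_xM$ via the affine structure on the fibre) and $H_{(x,v)}$ is the kernel of the connector $K\colon TTM\to TM$: if $t\mapsto(\alpha(t),V(t))$ represents $\xi\in T_{(x,v)}TM$, then $K\xi := \nabla_{\dot\alpha(0)}V$. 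Both $d\pi|_H$ and $K|_V$ are linear isomorphisms onto $T_xM$, and I set
\[
\overline g_{(x,v)}(\xi,\eta) := g_x(d\pi\,\xi,\, d\pi\,\eta) + g_x(K\xi,\, K\eta).
\]
Since each summand inherits the signature of $g_x$, this defines a smooth pseudo-Riemannian metric making $H\perp V$, whose signature is doubled from that of $g$.

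For a geodesic $\gamma$ of $(M,g)$, the tangent curve $\widetilde\gamma(t)=(\gamma(t),\dot\gamma(t))$ has velocity $\dot{\widetilde\gamma}$ characterised by $d\pi\,\dot{\widetilde\gamma} = \dot\gamma$ and $K\dot{\widetilde\gamma} = \nabla_{\dot\gamma}\dot\gamma = 0$. Thus $\dot{\widetilde\gamma}$ is the horizontal lift of $\dot\gamma$, and in particular
\[
\overline g(\dot{\widetilde\gamma},\dot{\widetilde\gamma}) = g(\dot\gamma,\dot\gamma),
\]
which already settles the claim on the causal character. To show that $\widetilde\gamma$ is in addition a geodesic of $\overline g$, I would apply the pseudo-Riemannian form of the Sasaki connection identity
\[
\nabla^{\overline g}_{X^h} Y^h\big|_{(x,v)} = (\nabla_X Y)^h \;-\; \tfrac{1}{2}\bigl(R(X,Y)v\bigr)^v
\]
for horizontal lifts of vector fields $X,Y$ on $M$. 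Taking $X = Y = \dot\gamma$, the first term vanishes by the geodesic equation for $\gamma$ and the second vanishes by the antisymmetry $R(\dot\gamma,\dot\gamma)=0$; hence $\nabla^{\overline g}_{\dot{\widetilde\gamma}}\dot{\widetilde\gamma} = 0$, as required.

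The only genuine technical step is the derivation of the Sasaki connection identity in the pseudo-Riemannian setting. I expect no essential obstacle here: the standard Riemannian proof rests on Koszul's formula together with the bracket relations
\[
[X^h,Y^v] = (\nabla_X Y)^v,\qquad [X^h,Y^h]_{(x,v)} = [X,Y]^h - (R(X,Y)v)^v,\qquad [X^v,Y^v] = 0
\]
between lifts of vector fields on $M$, and relies solely on the non-degeneracy of the metric, not on its positivity. I would rerun this computation with careful sign-tracking, or alternatively verify the required connection formulas directly in natural bundle coordinates $(x^i,v^i)$ on $TM$.
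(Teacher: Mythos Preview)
Your proposal is correct and follows essentially the same approach as the paper: both construct the Sasaki metric $\overline g$ via the horizontal/vertical splitting induced by the Levi-Civita connection, observe that the tangent curve of a geodesic has purely horizontal velocity (hence inherits the causal character of $\dot\gamma$), and then argue that this horizontal lift is itself a $\overline g$-geodesic. The only difference is cosmetic: the paper phrases the last step by noting that $\pi_{TM}\colon (TM,\overline g)\to (M,g)$ is a pseudo-Riemannian submersion and appeals to the horizontal-lift lemma for submersions (Lemma~9.44 in Besse's \emph{Einstein Manifolds}), whereas you invoke the explicit Sasaki connection identity $\nabla^{\overline g}_{X^h}Y^h = (\nabla_X Y)^h - \tfrac12(R(X,Y)v)^v$ and specialise to $X=Y=\dot\gamma$; these are two packagings of the same computation.
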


\begin{proof}
The Levi-Civita connection of $(M,g)$ induces a (fiber-wise linear) connection map $K_g\colon TTM\to TM$ with image transversal to the fibers of the bundle 
$\pi_{TM}\colon TM\to M$. Therefore every tangent space of $TM$ is isomorphic to the product 
$\ker (\pi_{TM})_{\ast} \oplus \ker K_g$. Further $\ker (\pi_{TM})_{\ast}$ as well as $\ker K_g$ is isomorphic to a tangent space $TM_p$. Define the metric 
$\overline{g}$ such that the map $(\pi_{TM})_{\ast}\times K_g \colon T(TM_p)_v\to TM_p\times TM_p$ becomes an isometry where $TM_p\times TM_p$ is equipped with 
the product metric $g\oplus g$.

Note that $\pi_{TM}\colon (TM,\overline{g})\to (M,g)$ is a pseudo-Riemannian submersion. Now the claim follows similar to  Lemma 9.44 in \cite{besse2} since the 
tangent curves of geodesics are parallel lifts of the geodesics to the tangent bundle. 

Further note that $((\pi_{TM})_\ast\times K_g)(\ddot{\gamma})=(\dot\gamma,0)$, if $\gamma$ is a geodesic. This implies that the causal character of the tangent 
curve is the same as the causal character of the underlying geodesic. 
\end{proof}

\begin{remarklocstar}
$\overline{g}$ is called the Sasaki metric of $TM$.
\end{remarklocstar}

\begin{proof}[Proof of Theorem \ref{complique}]
We denote by $\F$ the foliation on $TM$ that is generated by the geodesic flow of $g$. According to  Proposition \ref{P2.1}, the leaves of $\F$ are geodesics of the Sasaki 
metric $\overline{g}$ and  all causal type appear in $\F$. It follows from Proposition \ref{period} that if $\F$ can be periodically reparameterized (i.~e.\@ if the leaves of $\F$ 
are closed and have locally bounded length)  then it does not change type.  But as it takes all the possible types, it means that $g$ is Riemannian or anti-Riemannian.
 \end{proof}

\section{Thurston-Sullivan Example}\label{TSE}
We start by recalling Thurston's example of a real-analytic foliation by circles of unbounded length on a compact manifold. The presentation we are giving is elementary 
but it has the drawback that the geometric ideas behind the construction are hidden.  For a geometric description the reader is  invited to look at \cite{Sullivan} or 
\cite{besse1} p. 222.

Thurston's flow lives on the manifold $N\times \sss\times \sss$ where $N$ is the quotient of $H$ the $3$-dimensional Heisenberg group, 
$$H=\left\{
\begin{pmatrix}
  1&x&z\\0&1&y\\0&0&1                                                                                              
\end{pmatrix}\!,\, (x,y,z)\in \R^3
\right\},$$ 
by the left action of the lattice $\Gamma$ given by
 $$\Gamma=\left\{
\begin{pmatrix}
1 &a &c\\0&1&b\\0&0&1
\end{pmatrix}\!,\,  (a,b,c)\in \Z^3\right\}$$
and the circles $\sss$ are identified with $\R/2\pi\Z$. We denote by $p$ the projection from $H\times \R^2$ to $N\times \sss\times \sss$. Let $(t,u)$ be the coordinates on 
$\R^2$ (thus we have coordinates $(x,y,z,t,u)$ on $H\times \R^2$). Let $V_1$ and $V_2$ be the vector fields on $H\times \R^2$ given by 
\begin{eqnarray*} 
V_1&=&\cos(t) \partial_x+\sin(t) (\partial_y+x\partial_z),\\
V_2&=&-\sin(t) \partial_x + \cos(t) (\partial_y+x\partial_z).
\end{eqnarray*}
We note that the vector fields $V_1$, $V_2$,  $\partial_z$, $\partial_t$ and $\partial_u$ are everywhere independent and that they are  invariant by the action of $\Gamma'=
\Gamma\times (2\pi\Z)^2$. Thus, they define a moving frame on the manifold $N\times \sss\times \sss$. 

We will denote by $X$  Thurston's vector field on $H\times \R^2$, it is defined by: 
$$X=\sin(2u)V_1 +2\sin^2(u)\partial_t -\cos^2(u) \partial_z.$$
We see that $X$ is also $\Gamma'$-invariant.

In what follows, we will denote by the same letter a $\Gamma'$-invariant vector field on $H\times \R^2$ and its projection on $N\times \sss\times \sss$.

There is an easy way to see  that the foliation spanned by $X$ on $N\times \sss\times \sss$ has closed leaves of unbounded length. On  $U=\{(x,y,z,t,u)\in H\times \R^2 , 
u\not\equiv 0 \,(\operatorname{mod} \pi)\}$ we define the vector field $W$ by 
$$W=\frac{1}{2\sin^2(u)}X=\frac{1}{\tan(u)}\,V_1 +\partial_t-\frac{1}{2\tan^2 (u)}\,\partial_z.$$
We can easily compute $\phi$, the flow of $W$:
$$\begin{array}{ll}
\phi_s{
(x,y,z,t,u)}=&\!\!\!\left(\frac{\sin(t+s)-\sin(t)}{\tan(u)}
+x,\frac{\cos(t)-\cos(t+s)}{\tan(u)} +y, \right. 
\\
& 
\left. z+\frac{\sin(2t)-\sin(2t+2s)}{4\tan^2(u)}
+\frac{(\cos(t+s)-\cos(t))(\sin(t)+ x \tan(u))}{\tan^2(u)},
t+s,u\right). 
\end{array}
$$
Hence, it is  clear that the projection of $W$ on $N\times \sss\times \sss$ is a vector field defined on $p(U)$ which has a $2\pi$-periodic flow. It follows that the 
integral curves of $X$ on $p(U)$ are all closed and that their length is unbounded (if we choose a  Riemannian metric $h$ such that $h(X,X)=1$, their length is equal to 
$2\pi/\sin^2(u)$). To conclude, we need to check that the integral curves on $N\times \sss\times \sss\setminus p(U)$ are also closed. It is true because on this closed 
subset (known as the ``bad set'') $X=\partial_z$ and because there is an element of $\Gamma$ acting by translation along the coordinate $z$.

In what follows, we will principally use a smooth deformation of $X$, denoted $X_\xi$, instead of $X$ itself. This vector field is defined by:
$$X_\xi=2\xi(u)\cos(u) V_1 +2\xi^2(u)\partial_t -\cos^2(u) \partial_z,$$
where  $\xi : \R \rightarrow \R$  is  the smooth, $2\pi$-periodic function defined by $\xi(u)=e^{-1/\sin^2(u)}$ if $0<u<\pi$ and by $\xi(u)=-e^{-1/\sin^2(u)}$ if $-\pi<u<0$. 
We remark that $\xi^{(k)}(n\pi)=0$ for any $k\in \N$ and any $n\in \Z$, consequently $|\xi|$ is also smooth.
\begin{figure}[h!]
\centering
\includegraphics[width=0.4\textwidth]{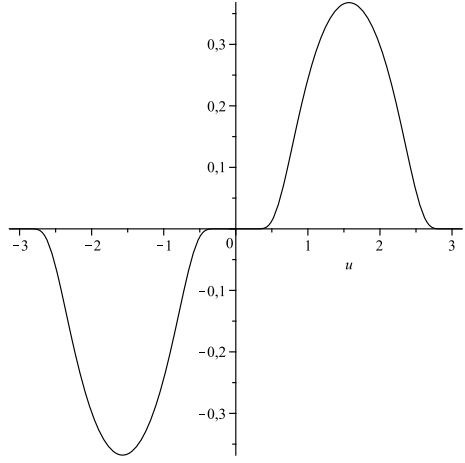}
\caption{The graph of $\xi$}
\end{figure}

As above,  we define $W_\xi$ as $\frac{1}{2\xi^2}X_\xi$. The flow of $W_\xi$ is very similar to the flow of $W$, in particular it is  also periodic, therefore $X_\xi$ also 
generates a foliation by circles with unbounded length. This foliation is no more real-analytic.

\section{The Lightlike Construction}
In this section, we construct a real-analytic  Lorentzian metric $g$ on $N\times \sss\times \sss$ such that $D_XX=0$ (where $D$ is the Levi-Civita connection of $g$) and 
$g(X,X)=0$ everywhere. First, we remark that the vector fields  $X$, $V_1$, $V_2$, $2\partial_t+\partial_z$ and $\partial_u$ are everywhere independent and therefore they 
define a moving frame on $N\times \sss\times \sss$. Then, we define $g$ as the metric which is given in the moving frame $(X, \partial_u,V_1, V_2, 2\partial_t+\partial_z)$ by 
the  matrix 
$$\begin{pmatrix}
0&1&0&0&0\\
1&0&0&0&0\\
0&0&1&0&0\\
0&0&0&1&0\\
0&0&0&0&1
  \end{pmatrix}.
$$
This is clearly a real-analytic Lorentzian metric. The $1$-form $X^\flat$ (defined by $X^\flat(v)=g(X,v)$) is $du$ and therefore is closed. For any vector field $Z$, we have:
\begin{eqnarray*}
0=dX^\flat(X,Z)&=&X\cdot g(X,Z)-W\cdot g(X,X)-g(X,[X,Z])\\&=&g(D_XX,Z)+g(D_ZX,X)=g(D_XX,Z)+\frac{1}{2}Z\cdot g(X,X)\\&=&g(D_XX,Z).
\end{eqnarray*}
Hence $D_XX=0$. 

Let us remark that the Riemannian metric on the distribution spanned by $\{V_1, V_2, 2\partial_t+\partial_z\}$ can be replace by any pseudo-Riemannian metric. As there is 
so many metrics making $X$ geodesic, we may wonder if there is a best one (as for example a flat one).

\section{The Type Changing Construction}\label{typechanging}

In this section, we will provide a metric $g$ such that $D_{X_\xi}{X_\xi}=0$ and such that $g(X_\xi,X_\xi)$ changes signs. We start the construction in a neighborhood of the 
bad set \ie for $u$ close to $0 \,(\operatorname{mod} \pi)$. As above  we use a moving frame to write the metric. Let $g_0$ be the symmetric $2$-tensor field whose 
expression in the moving frame $(\partial_t, V_1,V_2,\partial_z,\partial_u)$ is given by the following matrix (denoted $G_0$):
$${\renewcommand{\arraystretch}{1.3}\begin{pmatrix}\renewcommand{\arraystretch}{1.5}

\frac{1}{4}\cos^4(u)                     &\frac{|\xi(u)|}{2 \cos(u)} & 0     & \frac{1}{2}\cos^2(u)\xi^2(u)-\frac{\xi(u)|\xi(u)|}{\cos^2(u)} & 0 \\
\frac{|\xi(u)|}{2 \cos(u)}               &   0                & \frac{1}{2} &          \frac{ |\xi(u)|\xi^2(u)}{\cos^{3}(u)}   & 0  \\
        0                                &      \frac{1}{2}   & 0           &             \frac{\xi(u)}{\cos(u)}        & 0  \\
\frac{1}{2}\cos^2(u)\xi^2(u)-\frac{\xi(u)|\xi(u)|}{\cos^2(u)} &  \frac{|\xi(u)|\xi^2(u)}{\cos^{3}(u)} &  \frac{\xi(u)}{\cos(u)}  &  \xi^4(u)        & -1  \\
                      0                  &         0          &    0         &               -1                      & 1
  \end{pmatrix}}$$
For $u=0\,(\operatorname{mod} \pi)$, it is easy to verify that  this matrix defines a symmetric bilinear form of signature $(3,2)$ (in particular non degenerate). It follows by 
continuity that there exists $\eta>0$ such that $g_0$ is a well defined pseudo-Riemannian metric of signature $(3,2)$ on $N\times \sss \times \,]-\eta,\eta[$ and on  
$N\times \sss \times \,]\pi-\eta,\pi+\eta[$.

Applying $G_0$ to $(2\xi^2(u),2\xi(u)\cos(u),0,-\cos^2(u), 0)$, we get 
$$(2\xi(u)|\xi(u)|,0,0,0,\cos^2(u)).$$
It implies that  $g(X_\xi,X_\xi)=4\xi^3(u)|\xi(u)|$ (and therefore $g(W_\xi,W_\xi)=\frac{|\xi(u)|}{\xi(u)}=\pm 1$) and that 
$$X_\xi^\perp=\operatorname{span}\big\{V_1,V_2,\partial_z, -\cos^2(u)\partial_t + 2\xi(u)|\xi(u)|\partial_u\big\}.$$

At this time, the metric $g_0$ is only defined on a neighborhood of the bad set. In order to extend it, we write it in the moving frame 
$(W,V_1,V_2,\partial_z,-\cos^2(u)\partial_t + 2\xi(u)|\xi(u)|\partial_u)$. It reads:
$$\begin{pmatrix}
\frac{|\xi(u)|}{\xi(u)}& 0   \\0&L(u)
  \end{pmatrix},$$
where $L(u)$ is the following matrix:
$${\renewcommand{\arraystretch}{1.3}\begin{pmatrix}
  0                           & \frac{1}{2}   &\frac{|\xi^3(u)|}{\cos^3(u)}       & \frac{-\cos(u)}{2}|\xi(u)|     \\
   \frac{1}{2}              & 0                 &  \frac{\xi(u)}{\cos(u)}                     & 0                                \\ 
\frac{|\xi^3(u)|}{\cos^3(u)} &  \frac{\xi(u)}{\cos(u)}   &             \xi^4(u)                   & \frac{-\cos^4(u)}{2}\xi^2(u)-2 \xi(u)|\xi(u)|  \\
\frac{-\cos(u)}{2}|\xi(u)|   & 0             &   \frac{-\cos^4(u)}{2}\xi^2(u)-2 \xi(u)|\xi(u)|    &   \frac{1}{4}\cos^8(u)+4\xi^4(u)
 \end{pmatrix}}.
$$

When $u\in \,]0,\eta[\,\cup\,]\pi-\eta,\pi[$, the matrix $L(u)$ lies in the space on bilinear forms of signature $(2,2)$ \ie in $\operatorname{SL}(4,\R)/\operatorname{SO}(2,2)$.
As this space is a connected manifold there exists a smooth map $M:\,]0,\pi[\,\rightarrow \operatorname{SL}(4,\R)/\operatorname{SO}(2,2)$ such that $M$ and $L$ coincide 
on $]0,\eta/2[\,\cup\,]\pi-\eta/2,\pi[$. Let $g_1$ be the metric on $N\times \sss\times\, ]0,\pi[$ whose matrix in the moving frame $(W,V_1,V_2,\partial_z,-\cos^2(u)\partial_t + 
2\xi(u)|\xi(u)|\partial_u)$ is $\begin{pmatrix} 1&0\\0&M(u) \end{pmatrix}$.

When  $u\in\, ]-\pi,-\pi+\eta[\,\cup\,]-\eta,0[$,  the matrix $L(u)$ lies in the space on bilinear forms of signature $(3,1)$ \ie in $\operatorname{SL}(4,\R)/\operatorname{SO}(3,1)$. 
As $\operatorname{SL}(4,\R)/\operatorname{SO}(3,1)$ is a connected manifold there exists a smooth map $N:\,]-\pi,0[\,\rightarrow 
\operatorname{SL}(4,\R)/\operatorname{SO}(3,1)$ such that $N$ and $L$ coincide on $]-\pi,-\pi+\eta/2[\,\cup\,]-\eta/2,0]$. Let $g_2$ be the metric on $N\times \sss\times\, 
]-\pi,0[$ whose matrix in the moving frame $(W,V_1,V_2,\partial_z,-\cos^2(u)\partial_t + 2\xi(u)|\xi(u)|\partial_u)$ is $\begin{pmatrix} -1&0\\0&N(u) \end{pmatrix}$.

We can now give the metric, we are looking for. By construction the metrics $g_0$, $g_1$ and $g_2$ glue together in a smooth metric $g$ on $M\times \sss\times\sss$. 
Moreover on $p(U)=N\times \sss\times (]-\pi,0[\,\cup\,]0,\pi[)$, we have:
\begin{align*} g(W_\xi,W_\xi)&=\pm 1\\
X_\xi^\perp&=\operatorname{span}\{V_1,V_2,\partial_z, -\cos^2(u)\,\partial_t + 2\xi(u)|\xi(u)|\,\partial_u\}
\end{align*}

We have to prove now that  the foliation generated by $X_\xi$ is geodesic.  It will be done with the help of the following facts:
\begin{factloc}\label{density}
$D_{X_\xi}X_\xi=0$ on $N\times \sss\times \sss$ if and only if $D_{W_\xi}W_\xi=0$ on $p(U)$.
\end{factloc}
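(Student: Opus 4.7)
The plan is to exploit the proportionality $X_\xi = 2\xi^2(u)\, W_\xi$ on $p(U)$, which is just scaling by a function. For the Levi-Civita connection $D$, the standard identity reads
\[
D_{fY}(fY) \;=\; f\cdot Y(f)\cdot Y \;+\; f^2\, D_Y Y,
\]
so with $f(u)=2\xi^2(u)$ and $Y=W_\xi$ I get, on $p(U)$,
\[
D_{X_\xi}X_\xi \;=\; 2\xi^2(u)\, W_\xi\!\left(2\xi^2(u)\right) W_\xi \;+\; 4\xi^4(u)\, D_{W_\xi}W_\xi.
\]

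The key observation is that $W_\xi(2\xi^2) = 0$. Indeed, from the explicit formula
\[
W_\xi \;=\; \frac{\cos(u)}{\xi(u)}\,V_1 \;+\; \partial_t \;-\; \frac{\cos^2(u)}{2\xi^2(u)}\,\partial_z,
\]
one sees that $W_\xi$ has no $\partial_u$ component (the vector fields $V_1$, $\partial_t$ and $\partial_z$ are all tangent to the level hypersurfaces $\{u=\mathrm{const}\}$), so $W_\xi$ annihilates every function of $u$ alone. Consequently, on $p(U)$,
\[
D_{X_\xi}X_\xi \;=\; 4\xi^4(u)\, D_{W_\xi}W_\xi.
\]
Since $p(U) = \{u \not\equiv 0 \pmod{\pi}\}$ is exactly the set where $\xi(u)\neq 0$, the factor $4\xi^4(u)$ is strictly positive there. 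This gives the biconditional on $p(U)$ directly.

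To upgrade the equivalence to all of $N\times\sss\times\sss$, I will use density. The vector field $D_{X_\xi}X_\xi$ is smooth on $N\times\sss\times\sss$ because both $X_\xi$ and $g$ are smooth (the construction of $g$ via $g_0, g_1, g_2$ in Section \ref{typechanging} was arranged precisely so that $g$ glues smoothly across the bad set). Since $p(U)$ is open and dense, vanishing of the smooth vector field $D_{X_\xi}X_\xi$ on $p(U)$ forces its vanishing everywhere. The converse direction is already contained in the identity above, restricted to $p(U)$. No real obstacle is expected; the only subtle point is the density argument, which works precisely because the bad set has empty interior.
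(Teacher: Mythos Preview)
Your proof is correct and follows essentially the same approach as the paper: both establish the identity $D_{X_\xi}X_\xi = 4\xi^4(u)\, D_{W_\xi}W_\xi$ on $p(U)$ by noting that $X_\xi$ (equivalently $W_\xi$) has no $\partial_u$-component and hence annihilates $\xi^2(u)$, and then invoke density of $p(U)$ together with smoothness of $D_{X_\xi}X_\xi$ to pass to the whole manifold. Your write-up is slightly more explicit about the Leibniz identity and the density step, but the argument is the same.
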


\begin{proof} As $X_\xi=2\xi^2(u)W$ and $X_\xi.\xi^2(u)=0$, we have $D_{X_\xi}X_\xi=4\xi^4(u) D_{W_\xi}W_\xi$  therefore the equivalence is clear on $p(U)$. But $p(U)$ is 
dense in $N\times \sss\times \sss$ and $D_{X_\xi}X_\xi=0$ on $p(U)$ is also equivalent to  $D_{X_\xi}X_\xi=0$ on the whole manifold.
\end{proof}

Koszul's formula entails easily the next fact, the proof of which is left to the reader.
\begin{factloc}\label{Koszul}
Let $Z$ be a  vector field on a pseudo-Riemannian manifold $(M,g)$ such that $g(Z,Z)=\pm 1$. We have $D_ZZ=0$ if and only if $Z$ preserves its orthogonal distribution.
\end{factloc}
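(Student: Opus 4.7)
The plan is to derive both implications from a single application of Koszul's formula, using the hypothesis $g(Z,Z) = \pm 1$ to kill all terms involving derivatives of $g(Z,Z)$. Testing Koszul's identity for $D_Z Z$ against an arbitrary vector field $W$, and using $[Z,Z] = 0$ together with $W \cdot g(Z,Z) = 0$, one obtains the compact form
$$g(D_Z Z, W) = Z \cdot g(Z,W) - g([Z,W], Z).$$

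The first step is to observe that $D_Z Z$ is automatically $g$-orthogonal to $Z$: setting $W = Z$, both terms on the right-hand side vanish. Since the normalization $g(Z,Z) = \pm 1$ guarantees that $g|_{\R Z}$ is non-degenerate, the tangent bundle splits as $TM = \R Z \oplus Z^\perp$, and hence the vanishing of $D_Z Z$ is equivalent to $g(D_Z Z, Y) = 0$ for every local vector field $Y$ with $g(Y,Z) = 0$.

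For such $Y$ the term $Z \cdot g(Z,Y)$ vanishes identically, so the displayed identity collapses to $g(D_Z Z, Y) = -g([Z,Y], Z)$. This single identity is the whole proof: the left-hand side vanishes for every $Y \perp Z$ if and only if $[Z,Y]$ itself lies in $Z^\perp$ whenever $Y$ does, which is precisely the infinitesimal formulation of the flow of $Z$ preserving $Z^\perp$. There is no real obstacle here; the only conceptual point is to unwrap ``$Z$ preserves its orthogonal distribution'' as the Lie-bracket condition $[Z, Y] \in Z^\perp$ for every $Y \in Z^\perp$, and to recognise that the normalization $g(Z,Z) = \pm 1$ is exactly what makes both Koszul terms involving derivatives of $g(Z,Z)$ disappear, so that the pseudo-Riemannian case works verbatim as the Riemannian one.
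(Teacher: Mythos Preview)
Your argument is correct and is exactly what the paper intends: it does not give a proof but states that ``Koszul's formula entails easily the next fact, the proof of which is left to the reader,'' and your computation is the straightforward realization of that hint. The identity $g(D_ZZ,Y)=-g([Z,Y],Z)$ for $Y\perp Z$ that you isolate is also precisely what the paper uses implicitly in Fact~5.3 immediately afterwards.
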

As $W_\xi^\perp$ is spanned by $V_1,V_2,\partial_z, -\cos^2(u)\,\partial_t + 2\xi(u)|\xi(u)|\,\partial_u$, facts \ref{density} and \ref{Koszul} imply:
\begin{factloc}\label{crochets}
 $D_{X_\xi}X_\xi=0$ if and only if 
\begin{align*} 
g(W_\xi,[W_\xi,V_1])&=g(W_\xi,[W_\xi,V_2])=g(W_\xi,[W_\xi,\partial_z])\\
&=g(W_\xi,[W_\xi, -\cos^2(u)\,\partial_t + 2\xi(u)|\xi(u)|\, \partial_u])=0.
\end{align*}
\end{factloc}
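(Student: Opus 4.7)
The plan is to chain Facts \ref{density} and \ref{Koszul} and reduce the preservation of the orthogonal distribution to the four specific bracket conditions by linearity.

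First I would invoke Fact \ref{density} to replace the equation $D_{X_\xi}X_\xi=0$ on $N\times\sss\times\sss$ by the equivalent equation $D_{W_\xi}W_\xi=0$ on the open dense set $p(U)$, where $W_\xi$ is defined and $g(W_\xi,W_\xi)=\pm 1$. Since $W_\xi$ is a $g$-unit vector field on $p(U)$, Fact \ref{Koszul} applies and $D_{W_\xi}W_\xi=0$ is equivalent to the statement that the Lie bracket $[W_\xi,Y]$ lies in $W_\xi^\perp$ for every local section $Y$ of $W_\xi^\perp$, i.e.\ $g(W_\xi,[W_\xi,Y])=0$ for all $Y\in\Gamma(W_\xi^\perp)$.

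Next I would exploit $C^\infty(p(U))$-linearity of the map $Y\mapsto g(W_\xi,[W_\xi,Y])$ when $Y$ is constrained to $W_\xi^\perp$. Indeed, if $Y=\sum_{i=1}^4 f_iY_i$ with $Y_i$ sections of $W_\xi^\perp$, then
\begin{align*}
g(W_\xi,[W_\xi,Y])&=\sum_i\bigl(W_\xi(f_i)\,g(W_\xi,Y_i)+f_i\,g(W_\xi,[W_\xi,Y_i])\bigr)\\
&=\sum_i f_i\,g(W_\xi,[W_\xi,Y_i]),
\end{align*}
because $g(W_\xi,Y_i)=0$ by definition of $W_\xi^\perp$. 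Hence the vanishing of $g(W_\xi,[W_\xi,\cdot])$ on all of $W_\xi^\perp$ is equivalent to its vanishing on any global frame of $W_\xi^\perp$.

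Finally, I would recall from the preceding paragraph that
$$W_\xi^\perp=\operatorname{span}\bigl\{V_1,\,V_2,\,\partial_z,\,-\cos^2(u)\,\partial_t+2\xi(u)|\xi(u)|\,\partial_u\bigr\}$$
on $p(U)$, so choosing these four vector fields as the frame yields exactly the four conditions listed in the fact. Combining the three reductions gives the claimed equivalence. I do not expect any significant obstacle: the only subtle point is that the bracket conditions are a priori conditions on $p(U)$ only, but Fact \ref{density} already handles the extension from $p(U)$ to the whole manifold by density and smoothness.
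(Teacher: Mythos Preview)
Your argument is correct and follows exactly the route the paper indicates: chain Fact~\ref{density} with Fact~\ref{Koszul}, then use the explicit frame of $W_\xi^\perp$ to reduce to the four bracket conditions. The paper states this as a one-line consequence without spelling out the $C^\infty$-linearity step you include, so your version is simply a more detailed rendering of the same reasoning.
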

Computing the brackets above, we find:
\begin{eqnarray*}
\, [W_\xi,V_1] &= & V_2, \\
\, [W_\xi, V_2 ]&=&\frac{\cos(u)}{\xi(u)}\, \partial_z-V_1,\\
\, [W_\xi,\partial_z] &=&0, \\
\, [W_\xi, Y ]&=& -2\xi(u)|\xi(u)|\,\partial_u\cdot\frac{\cos(u)}{\xi(u)}\,V_1 +\frac{\cos^3(u)}{\xi(u)}\,V_2 +\xi(u)|\xi(u)|\,\partial_u\cdot\frac{\cos^2(u)}{\xi^2(u)}\,\partial_z, 
\end{eqnarray*}
where $Y=-\cos^2(u)\,\partial_t + 2\xi(u)|\xi(u)|\,\partial_u$. All these brackets are tangent to $W_\xi^\perp$ therefore, by fact \ref{crochets}, $D_{X_\xi}X_\xi=0$.

It is not difficult to modify the metric $g$ in order to have $D_XX=0$ (\ie to make Thurston's flow geodesic and not a smooth modification of it). At the beginning of the 
construction, we just have to replace the function $\xi$ by the function sinus and to remove the absolute values. The metric becomes real-analytic in the neighborhood of the 
bad set but is  only smooth globally (the gluing process is unchanged). Unfortunately,  in that case we have $g(X,X)\geq 0$, the vector field  $X$ is nowhere timelike.

To conclude this section, we prove that $X_\xi$ preserves the volume form of $g$ \ie that $\operatorname{div}X_\xi=0$. We do it  because the geodesic flow of any 
pseudo-Riemannian manifold is divergence free. 
\begin{propositionloc} 
The vectorfield $X_\xi$ is divergence free.
\end{propositionloc}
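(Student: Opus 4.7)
The plan is to work on the universal cover $H\times\R^2$ with the global coordinates $(x,y,z,t,u)$; since $g$ and $X_\xi$ are $\Gamma'$-invariant, the computation descends to $N\times\sss\times\sss$. In these coordinates
$$\operatorname{div}(X_\xi)=\frac{1}{\sqrt{|\det g|}}\sum_i\partial_i\!\left(\sqrt{|\det g|}\,X_\xi^i\right),$$
so it suffices to show (i) $\sum_i\partial_i X_\xi^i=0$ in the coordinate basis, and (ii) $|\det g|$ depends on $u$ alone. Together with the fact that $X_\xi^u\equiv 0$, these give $\operatorname{div}(X_\xi)=0$ immediately, because the first term in the expanded derivative vanishes by (i) and the second term equals $X_\xi^u\,\partial_u\sqrt{|\det g|}=0$ by (ii) and $X_\xi^u=0$.

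For (i), expanding $V_1=\cos(t)\partial_x+\sin(t)\partial_y+x\sin(t)\partial_z$ in the definition of $X_\xi$ gives
$$X_\xi^x=2\xi\cos u\cos t,\; X_\xi^y=2\xi\cos u\sin t,\; X_\xi^z=2\xi\cos u\,x\sin t-\cos^2u,\; X_\xi^t=2\xi^2,\; X_\xi^u=0,$$
and each component is manifestly independent of the variable it multiplies, so all five diagonal partial derivatives vanish by inspection. For (ii), the metric is constructed piecewise by $g_0,g_1,g_2$, each expressed in a moving frame whose Gram matrix has entries depending only on $u$. A short $5\times 5$ determinant computation shows that the Jacobian of the change from the coordinate basis to $(\partial_t,V_1,V_2,\partial_z,\partial_u)$ equals $\pm 1$, and that the further change from $(\partial_t,V_1,V_2,\partial_z,\partial_u)$ to $(W_\xi,V_1,V_2,\partial_z,Y)$ contributes a factor $2\xi(u)|\xi(u)|$. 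Both are functions of $u$, so $|\det g|$ is a function of $u$ on each of the three pieces.

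The remaining bookkeeping is harmless: the divergence is a local operator, and by construction the three pieces of $g$ agree on their overlaps, so the three local identities $\operatorname{div}(X_\xi)=0$ combine into a single global identity. The only step requiring care is the $5\times 5$ change-of-basis determinant in (ii); everything else is essentially inspection. A natural alternative, using $X_\xi=2\xi^2W_\xi$ on $p(U)$ together with $D_{W_\xi}W_\xi=0$ and $g(W_\xi,W_\xi)=\pm1$ to reduce to $\operatorname{div}(W_\xi)=0$, looks tempting but still requires a frame computation of comparable length, so the direct coordinate route above seems simplest.
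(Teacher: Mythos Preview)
Your argument is correct and rests on the same two observations as the paper's proof: the volume density depends on $u$ alone, and $X_\xi$ has no $\partial_u$-component. The only difference is presentational---you compute with the coordinate divergence formula $\frac{1}{\sqrt{|\det g|}}\sum_i\partial_i(\sqrt{|\det g|}\,X_\xi^i)$, while the paper evaluates $\mathcal{L}_{X_\xi}\omega_g=d(i_{X_\xi}\omega_g)$ directly on the moving frame $(\partial_t,V_1,V_2,\partial_z,\partial_u)$, using that all pairwise brackets land in $\operatorname{span}\{V_1,V_2,\partial_z\}$ to kill the bracket terms and that $\omega_g$ on this frame is a function of $u$ to kill the derivative terms; this bypasses your change-of-basis bookkeeping in (ii).
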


\begin{proof} We denote the volume form associated to  $g$ by $\omega_g$. We want to compute $\mathcal L_{X_\xi}\omega_g=d (i_{X_\xi} \omega_g)$.
We see that, if $S$ and $T$ are two vector fields chosen among $\partial_t$, $V_1$, $V_2$, $\partial_z$ and $\partial_u$, then $[S,T]$ belongs to 
$\operatorname{span}\{V_1,V_2,\partial_z\}$. It means that all the ''brackets terms'' in the computation of $d(i_{X_\xi}\omega)\,(\partial_t,V_1,V_2,\partial_z,\partial_u)$ 
vanish. Moreover, the function $\omega_g(\partial_t,V_1,V_2,\partial_z,\partial_u)$ is clearly constant on the submanifolds $N\times \sss\times\{u\}$. Therefore:
$$d(i_{X_\xi}\omega)\,(\partial_t,V_1,V_2,\partial_z,\partial_u)=\partial_u\cdot \omega_g(X_\xi,\partial_t,V_1,V_2,\partial_z)=0. $$
\end{proof}

\section{The $2$-Dimensional Case}\label{sec2}

Adapting the terminology of \cite{besse1}, we will say that a pseudo-Riemannian metric is a timelike-SC-metric (respectively spacelike-SC-metric) if all its timelike 
(resp. spacelike) geodesic are simply closed geodesic with the same (pseudo-Riemannian) length.

\begin{theoremloc}\label{T2.1}
If $(M,g)$ is a  Lorentzian $2$-manifold all of whose timelike (respectively spacelike) geodesics are closed, then $(M,g)$ is finitely covered by a timelike-SC-metric 
(resp. spacelike SC-metric)  on $S^1\times\R$. 
\end{theoremloc}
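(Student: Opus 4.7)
The strategy is to promote the hypothesis on closed timelike geodesics to a smooth $S^1$-action on the bundle of unit timelike vectors via the machinery of Section~\ref{sec1}, and then extract the topology of $M$ from the bundle structure this produces.

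After passing to a finite cover of order at most $4$, assume $(M,g)$ is oriented and time-oriented; the hypothesis is preserved by such finite covers. Let $\pi\colon N\to M$ denote the bundle of future-directed $g$-unit timelike vectors; its fibers are contractible, so $N$ is a trivial $\R$-bundle over $M$, hence a connected $3$-manifold. By Proposition~\ref{P2.1} the Sasaki metric $\overline{g}$ makes the geodesic spray $G$ into a $\overline{g}$-unit timelike vector field on $N$ whose integral curves are Sasaki geodesics. By hypothesis every $G$-orbit is closed; Epstein's theorem applied to the $3$-manifold $N$ (localized via compact exhaustion, if needed) yields locally bounded orbit lengths, and Theorem~\ref{T2} then asserts that $G$ generates a smooth $S^1$-action on $N$. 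Proposition~\ref{period} confirms that all orbits share the same (timelike) causal type. After possibly passing to one more finite cyclic cover to eliminate exceptional orbits of the action, the $S^1$-action is free and every orbit acquires the common $\overline{g}$-length $\ell$; equivalently, every timelike geodesic of $(M,g)$ is simply closed of $g$-length $\ell$.

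Set $Q := N/S^1$, so that $\rho\colon N\to Q$ is a principal $S^1$-bundle over a smooth $2$-manifold. A key geometric input is that each fiber $\pi^{-1}(p)\cong\R$ maps under $\rho$ to a properly embedded line in $Q$, because two distinct future-directed unit timelike vectors at $p$ determine two distinct oriented closed geodesics (after verifying embeddedness of the closed timelike geodesics in $M$, which the uniform length condition forces). In particular $Q$ must be non-compact. Using $N\simeq M$ and $\pi_2(M)=0$ (no Lorentzian metric exists on $S^2$ or $\R P^2$), the homotopy long exact sequence of the principal circle bundle $\rho$ yields
\[
0\to\Z\to\pi_1(M)\to\pi_1(Q)\to 0.
\]
A classification argument for non-compact surfaces then pins $\pi_1(Q)$ down to the trivial group (or $\Z/2$ in the non-orientable subcase), giving $\pi_1(M)\cong\Z$. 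The surfaces with infinite cyclic fundamental group are $S^1\times\R$ and the Möbius band, the latter of which is doubly covered by $S^1\times\R$. In either case the metric pulled back to the cover is timelike-SC by construction. The spacelike case is entirely analogous, reversing the causal roles.

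\textbf{Main obstacle.} The most delicate step is the final topological deduction. The homotopy exact sequence alone does not exclude $M=T^2$ or the Klein bottle, both consistent with having $\pi_1(Q)\cong\Z$ in the sequence above; ruling them out requires the geometric fact that each $\pi$-fiber embeds as a genuine proper line in $Q$, forcing $Q$ to be non-compact. Establishing this embeddedness rigorously (equivalently, that the closed timelike geodesics of $M$ are embedded circles), together with handling potential exceptional orbits of the $S^1$-action through an additional finite cover, are the key technical points.
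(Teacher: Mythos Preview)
Your strategy of lifting to the unit timelike bundle $N$ and extracting an $S^1$-action via Theorem~\ref{T2} parallels the paper's setup, but your topological conclusion has a genuine gap. First a minor point: Epstein's theorem requires compactness, and $N$ (an $\R$-bundle over $M$) is never compact; ``compact exhaustion'' does not help since orbits need not remain in a given compact set. This is harmless, though---the geodesic spray is already a $\overline g$-unit vector field with closed geodesic orbits, so Theorem~\ref{T2} applies directly without Epstein.

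The real problem is ruling out the closed case. Even granting that each fibre $\pi^{-1}(p)$ embeds as a proper line in $Q$, this does \emph{not} force $\pi_1(Q)$ to be trivial: the cylinder $S^1\times\R$ contains plenty of properly embedded lines. With $\pi_1(Q)\cong\Z$ your exact sequence yields precisely $\pi_1(M)\cong\Z^2$ or the Klein-bottle group, so the torus is not excluded by your argument. Moreover the assertion that uniform length ``forces'' the closed geodesics to be embedded is unsupported: nothing prevents a period-$\ell$ geodesic from revisiting $p$ with a different velocity before closing up. (In the paper, simple closedness is obtained only \emph{after} the finite cover is constructed, not as an input.)

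The paper handles the two cases by entirely different mechanisms. In the non-compact case it observes (Lemma~\ref{L2.1}) that the $S^1$-action places a multiple of the geodesic class in the \emph{center} of $\pi_1(M)$; since a non-compact surface has free fundamental group and a free group with nontrivial center is infinite cyclic, one gets $\pi_1(M)\cong\Z$ immediately---no analysis of the quotient $Q$ is needed. For the closed case the paper gives an independent contradiction (using Lemma~\ref{L2.0}): extend a simply closed spacelike geodesic $\eta$ to a spacelike foliation and show, via a bigon/angle argument in the universal cover, that every spacelike geodesic meeting $\eta$ is somewhere tangent to this foliation. By Theorem~\ref{T1.1} such geodesics then lie in a compact subset of $TM$, which is impossible since the set contains all unit spacelike vectors along $\eta$. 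Your fibration picture does not capture this dichotomy, and the ``main obstacle'' you flag is in fact decisive: without a substitute for the paper's compact-case argument the plan does not close.
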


\begin{remarklocstar}
(i) The assumption on the topology of $M$ cannot be removed as  $S^2_1$ is diffeomorphic to $\R\times S^1$ and  
all spacelike 
geodesics are  simply closed. Further $S^2_1$ admits non-orientable quotients. So $M$ can be diffeomorphic to the M\"obius strip. 

(ii) The quotient of the pseudo-sphere $S^2_1$ by a finite order subgroup of its group of direct isometries  gives an example of a metric on $S^1\times \R$ all of whose spacelike geodesics are closed. However, when the subgroup is not trivial this metric is not a spacelike-SC-metric.

(iii) It is not possible to extend theorem \ref{T2.1} to a density result for non-closed geodesics, since one can construct pseudo-Riemannian metrics on any surface $M$ with with an open set of directions $U\subseteq TM$ such that any geodesic with tangents in $U$ are closed. 
\end{remarklocstar}

We split the proof of  Theorem \ref{T2.1} into two cases. One proving the closed case and the other proving the non-compact case.

\begin{proof}[Proof of theorem \ref{T2.1} for the non-compact case]
We start the proof by giving two lemmas.

\begin{lemmaloc}\label{L2.0}
Let $(M,g)$ be a Lorentzian $2$-manifold and $\gamma,\eta\colon S^1\to M$ homotopic, intersecting, smooth, regular  and spacelike/lightlike/timelike curves. 
Then $\gamma$ and $\eta$ have the same causal type.
\end{lemmaloc}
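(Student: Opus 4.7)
The plan is to derive a contradiction by passing to the universal cover, setting up a global null frame, and ruling out the existence of a timelike lift and a spacelike lift sharing both endpoints via a separation argument using the two null foliations.

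Since $M$ admits a Lorentzian metric it is not $S^{2}$, so the universal cover $\pi:\hat M\to M$ is diffeomorphic to $\R^{2}$. Simple-connectedness makes $\hat M$ null-orientable, hence the two null line sub-bundles $L_{1},L_{2}\subset T\hat M$ split globally and are trivial; pick non-vanishing sections $e_{1}\in L_{1},e_{2}\in L_{2}$ with $\hat g(e_{1},e_{2})=1$, so that every $v\in T\hat M$ decomposes as $v=ae_{1}+be_{2}$ with $\hat g(v,v)=2ab$. The line fields integrate to non-singular foliations $\mathcal F_{1},\mathcal F_{2}$ of $\hat M\cong\R^{2}$, and since a non-singular vector field on $\R^{2}$ has no nontrivial recurrence (Poincaré--Bendixson), every leaf of $\mathcal F_{i}$ is a properly embedded copy of $\R$, which therefore separates $\hat M$ into two open half-planes by Jordan--Schoenflies.

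Suppose for contradiction that $\gamma$ is timelike and $\eta$ is spacelike (the cases involving a lightlike curve are analogous). Reparameterize so that $\gamma(0)=\eta(0)=p$, pick a lift $\hat p$ of $p$, and lift the curves to paths $\hat\gamma,\hat\eta:[0,1]\to\hat M$ starting at $\hat p$. I reduce to the case where the homotopy is based at $p$, so that the lifts share their terminal value $\hat\gamma(1)=\hat\eta(1)=:\hat q$ (if only a free homotopy is given, this step requires adjusting $\eta$ by an inner automorphism of $\pi_{1}(M,p)$ coming from the homotopy). Writing $\dot{\hat\gamma}=a_{\gamma}e_{1}+b_{\gamma}e_{2}$, the inequality $\hat g(\dot{\hat\gamma},\dot{\hat\gamma})<0$ forces $a_{\gamma}b_{\gamma}<0$ everywhere; since $\{(a,b)\in\R^{2}:ab<0\}$ has two connected components, $(a_{\gamma},b_{\gamma})$ stays in one of them, so $a_{\gamma}$ and $b_{\gamma}$ have constant opposite signs. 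Analogously $a_{\eta}$ and $b_{\eta}$ have constant coinciding signs.

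Let $\ell_{i}$ denote the leaf of $\mathcal F_{i}$ through $\hat p$ and write $\hat M\setminus\ell_{1}=H_{1}^{+}\sqcup H_{1}^{-}$ and $\hat M\setminus\ell_{2}=H_{2}^{+}\sqcup H_{2}^{-}$, with signs recording the transverse $e_{2}$-, resp.\@ $e_{1}$-, component. Since $\dot{\hat\gamma}$ is transverse to $\ell_{1}$ at $\hat p$ with $b_{\gamma}$ of constant sign, $\hat\gamma$ enters $H_{1}^{\operatorname{sgn}(b_{\gamma})}$ and can never return to $\ell_{1}$ (a return would reverse the sign of $b_{\gamma}$); hence $\hat\gamma((0,1])\subset H_{1}^{\operatorname{sgn}(b_{\gamma})}$, and likewise $\hat\gamma((0,1])\subset H_{2}^{\operatorname{sgn}(a_{\gamma})}$, with the same conclusion for $\hat\eta$ with its own signs. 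Now the timelike sign pair $(\operatorname{sgn}(a_{\gamma}),\operatorname{sgn}(b_{\gamma}))\in\{(+,-),(-,+)\}$ and the spacelike sign pair $(\operatorname{sgn}(a_{\eta}),\operatorname{sgn}(b_{\eta}))\in\{(+,+),(-,-)\}$ disagree in at least one coordinate in every pairing, so $\hat q$ is forced to lie in opposite components of either $\hat M\setminus\ell_{1}$ or $\hat M\setminus\ell_{2}$, contradicting $\hat\gamma(1)=\hat\eta(1)$. The lightlike cases follow from the same setup: a lightlike lift is confined to a single null leaf (its tangent entirely in $L_{1}$ or $L_{2}$), which rules out any partner of a different causal character (including a lightlike partner along the other null foliation) ending at the same point.

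I expect the main obstacles to be (i) proving the properness of the null leaves in $\hat M$---a standard but nontrivial consequence of the absence of recurrence on the plane---and (ii) handling the passage from free to based homotopy using the intersection point $p$, which I would treat by replacing $\eta$ with a conjugate in the same free homotopy class if necessary.
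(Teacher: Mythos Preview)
Your argument is correct and takes a genuinely different route from the paper. The paper lifts both curves to the universal cover through a common point, extends their tangents to a global timelike and a global spacelike vector field (hence a global frame and an orientation of $\widetilde M$), argues that the two lifts must meet at least twice (a bigon) since the loops are homotopic, and obtains a contradiction from the fact that at consecutive corners of a bigon the frame orientation must flip. You instead exploit the two null foliations of $\widetilde M\cong\R^2$: the null leaves through $\hat p$ cut the plane into four quadrants, a timelike lift is trapped in the two ``timelike'' quadrants and a spacelike lift in the two ``spacelike'' ones, so the lifts can share no point other than $\hat p$. Both arguments encode the same phenomenon---a timelike and a spacelike direction always span the tangent plane with a fixed orientation---but yours makes the causal structure do the separating and avoids the somewhat informal extension of tangents to global frames; the price is the Poincar\'e--Bendixson input on properness of null leaves, which you correctly isolate. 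Your caveat about free versus based homotopy is legitimate and in fact also underlies the paper's bigon claim; one clean way to close it in either approach is to observe that on an oriented, time-oriented cover every transverse intersection of a future-timelike loop with a (say) right-spacelike loop has the same sign, so the algebraic intersection number equals the geometric one and must be nonzero, contradicting $[\gamma]\cdot[\eta]=[\gamma]\cdot[\gamma]=0$.
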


\begin{proof}
We prove the claim by contradiction. Assume that there exist two closed, homotopic, intersecting, smooth, regular and spacelike/lightlike/timelike curves 
$\gamma$ resp. $\eta$ with different causal types. 

Let  $\widetilde \eta$ and $\widetilde \gamma$ be lifts of these curves to $\widetilde M$ (the universal cover of $M$) with a non empty intersection. 
First note that $\widetilde{\gamma}$ and $\widetilde{\eta}$ do not self intersect. This can easily be seen from the theorem of Poincar\'e-Bendixson. 
We can therefore extend the tangents of both $\widetilde{\gamma}$ and $\widetilde{\eta}$ to non singular vector fields of the respective causal type. 
Orienting $T\widetilde{M}$ at one intersection of $\widetilde{\gamma}$ and $\widetilde{\eta}$ such that $\{\dot{\widetilde{\gamma}},\dot{\widetilde{\eta}}\}$ 
forms a positively oriented basis, induces, via the vector fields, an orientation on all of $\widetilde{M}$.

As the loops $\gamma$ and $\eta$  are homotopic  $\widetilde \eta$ and $\widetilde \gamma$ form a bigon $D$ (i.~e.\@ $\widetilde \eta$ and 
$\widetilde \gamma$ intersect at least twice). 
Now it is well known that at consecutive intersections the tangents of $\widetilde{\gamma}$ and $\widetilde{\eta}$ induce different orientations. This is clearly a 
contradiction. 
\end{proof}

Our problem being invariant by finite covering, we only need to consider oriented and time-oriented Lorentzian $2$-manifolds.

\begin{lemmaloc}\label{L2.1}
Let $(M,g)$ be a pseudo-Riemannian manifold with a foliation by non-lightlike closed geodesics. If the foliation is orientable, the fundamental class of any closed 
geodesic has a multiple that lies in the center of $\pi_1(M)$. 
\end{lemmaloc}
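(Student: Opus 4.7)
The plan is to convert the non-lightlike geodesic foliation hypothesis into a smooth $S^1$-action via the Wadsley-type machinery already established, and then to apply the classical topological fact that orbits of a circle action represent central elements of $\pi_1$.

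First I would invoke Proposition \ref{idem} to replace $g$ by a Riemannian metric $h$ on $M$ for which $\mathcal{G}$ remains a geodesic foliation. The hypothesis that leaves are closed, together with orientability of $\mathcal{G}$, then provides a globally defined unit vector field $X\in\Gamma(TM)$ tangent to $\mathcal{G}$, whose flowlines are $h$-geodesic circles. Theorem \ref{T2} now yields that $X$ is the infinitesimal generator of a smooth $S^1$-action $\phi\colon S^1\times M\to M$ whose orbits coincide with the leaves of $\mathcal{G}$. Since $X$ never vanishes, $\phi$ is fixed-point-free, so every stabilizer is a finite cyclic subgroup of $S^1$.

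The core topological step is to show that, for every $p\in M$, the orbit loop $\gamma_p\colon S^1\to M$, $s\mapsto \phi(s,p)$, represents a central element of $\pi_1(M,p)$. Given any loop $\alpha\colon [0,1]\to M$ based at $p$, I would consider
$$H\colon [0,1]\times [0,1]\to M,\qquad H(s,t):=\phi(s,\alpha(t)).$$
Because $\phi(0,\cdot)=\phi(1,\cdot)=\mathrm{id}_M$ and $\alpha(0)=\alpha(1)=p$, reading the boundary of the square counterclockwise from $(0,0)$ yields the concatenation $\gamma_p\cdot\alpha\cdot\gamma_p^{-1}\cdot\alpha^{-1}$. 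The map $H$ contracts this loop, so $[\gamma_p]$ commutes with $[\alpha]$ in $\pi_1(M,p)$. As $\alpha$ was arbitrary, $[\gamma_p]\in Z(\pi_1(M,p))$.

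Finally, let $\gamma$ denote the leaf through $p$ parameterized as a simple closed curve. The stabilizer of $p$ under $\phi$ is a finite cyclic subgroup $\Z/k\hookrightarrow S^1$, so the orbit map $\gamma_p$ factors through $S^1\to S^1/(\Z/k)\cong S^1$ as a $k$-fold covering of $\gamma$. Hence $[\gamma_p]=k[\gamma]$ in $\pi_1(M,p)$, and $k[\gamma]$ lies in the center of $\pi_1(M)$, as required. The only genuinely nontrivial ingredient is the commutation argument via the square $H$; the rest is essentially a repackaging of Proposition \ref{idem} and Theorem \ref{T2}.
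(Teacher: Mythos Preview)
Your argument is correct and is essentially the paper's own proof: both obtain a periodic flow from Theorem \ref{T2} and then use the torus map $(s,t)\mapsto \phi_t(\alpha(s))$ (your square $H$) together with the commutativity of $\pi_1(S^1\times S^1)$ to conclude centrality, with the ``multiple'' arising exactly as you say from the possibly nontrivial finite stabilizer. The only inessential difference is your detour through Proposition \ref{idem}: since Theorem \ref{T2} already applies to a pseudo-Riemannian unit field $|g(X,X)|=1$, the paper normalizes directly with respect to $g$ and skips the passage to a Riemannian metric.
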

\begin{proof}[Proof of lemma \ref{L2.1}]
Let $X$ be a vectorfield tangent to the foliation and such that $|g(X,X)|=1$. We know by Theorem \ref{T2} that under our assumption the flow of $X$ exists and is 
$T$-periodic for some $T>0$, hence it induces a free $\R/T\Z$-action on $M$. We denote this flow by $\phi$. Hence, for all $x\in M$ the curves $\phi_{[0,T]}(x)$ 
are freely homotopic as closed curves. Denote the fundamental class of those geodesics with $\sigma$.

Now let $\tau\in\pi_1(M)$ be arbitrary. Choose any representative $\eta$ of $\tau$ and define the map $F\colon S^1\times  \R/T\Z\to M$, 
$(s,t) \mapsto \phi_t(\eta(s))$. By construction we have $F_\ast(e_1)=\tau$ and $F_\ast(e_2)=\sigma$, where $e_1$ generates $\pi_1(S^1\times\{1\})$ and $e_2$ 
generates $\pi_1(\{1\}\times  \R/T\Z)$, so that 
$$\sigma\ast\tau=F_\ast(e_2)\ast F_\ast(e_1)=F_\ast(e_2\ast e_1)=F_\ast(e_1\ast e_2)=F_\ast(e_1)\ast F_\ast(e_2)=\tau \ast\sigma.$$
This shows that $\sigma$ commutes with $\tau$. Since $\tau$ was arbitrary, we see that $\sigma$ lies in the center of $\pi_1(M)$. 
\end{proof}

 We assume that all spacelike geodesics are closed, since the other case follows by considering $(M,-g)$ instead of $(M,g)$. Note that if $M$ is oriented and $(M,g)$ is time-oriented, the set of timelike vectors $\Time$ 
and the set of spacelike vectors $\Space$ are fiber bundles isomorphic 
to a double copy of the trivial fiber bundle $M\times \R^2$. Therefore the projections induce isomorphisms between $\pi_1(M)$ and the fundamental groups of 
each connected component of $\Time$ and $\Space$. By our assumption and  Proposition \ref{P2.1} we know that each component of  $\Space$ is 
foliated by closed non-lightlike geodesics, so we can apply  Lemma \ref{L2.1}. Thus the fundamental class of each closed geodesic has a multiple that must be in the center of $\pi_1(M)$. 

It is well known (\cite{stillwell}) that the fundamental group of a non-compact surface is free. Further the center of a free group is trivial iff the group is generated by 
a single element. The fundamental group of a non-compact surface is generated by a single element iff the surface is diffeomorphic to either $\R^2$ or $S^1\times \R$. 

It is further well known that no spacelike curve in a Lorentzian $2$-manifold is contractible. Therefore the fundamental class of the closed geodesics cannot be trivial. The only possibility left is $\pi_1(M)\cong \Z$, which implies $M\cong S^1\times \R$.

Let $(\widetilde M, \widetilde g)$ be the universal cover of $(M,g)$. Let $f$ be a generator of the fundamental  group of $M$ seen as a group of isometries of  $(\widetilde M, \widetilde g)$. According to Theorem \ref{T2} there exist $k\in \Z$ and $T>0$ such that for any unit speed spacelike geodesic $\widetilde\gamma$ of $\widetilde M$ and any $t_0\in \R$, we have 
$$\widetilde\gamma(t_0+T)=f^k(\widetilde\gamma(t_0)) \qquad (*)$$ 
($f^k$ corresponds to the homotopy class $\sigma$ defined  in the proof of  Lemma \ref{L2.1}). Hence, the group generated by $f^k$ acts by translation on 
 the spacelike geodesics. 

Let $\widehat M =\widetilde M/\langle f^k\rangle$ and $\widehat g$ be the metric induced by  $\widetilde g$ on $\widehat M$. Let $\widehat \gamma$ be a unit speed spacelike geodesic of $\widehat g$. Its lift to $\widetilde g$ is a unit speed spacelike geodesic of $\widetilde M$.
Let $X$ be a nowhere zero lightlike vector field on $\widetilde M$  which is invariant by $f$. Let $c$ be the integral curve of $X$ containing  $\widetilde \gamma(0)$. As $X$ is invariant by $f$, we know that $f^k(c)$ is an integral curve of $X$. Therefore $c=f^k(c)$ or $c\cap f^k(c)=\emptyset$.
By Lemma \ref{L2.0}, we see that $c\cap f^k(c)=\emptyset$. It follows that $c$ and $f^k(c)$ delimit a fundamental domain $D$ for the action of $\langle f^k\rangle$ 
on $\widetilde M$. Moreover, by recalling that $(\widetilde{M},\widetilde{g})$ is time orientable as well as orientable and considering the angle between 
$\widetilde \gamma'$ and $X$, we see that $\widetilde \gamma$ can intersect only once $c$ or $f^k(c)$. Hence, the relation $(*)$ implies that 
$\widetilde \gamma([0,T])\subset D$.
As a simply connected Lorentzian surface does not contain any closed spacelike curves, the geodesic  $\widetilde \gamma$  does not self-intersect. It 
follows 
that $\widehat \gamma([0,T])$ is a simply closed geodesic of length $T$ and that $(\widehat M,\widehat g)$ is the finite covering we were 
looking for.
\end{proof}

\begin{proof}[Proof of  Theorem \ref{T2.1} for the closed case]
Assume that there exists a closed Lorent\-zian $2$-manifold $(M,g)$ all of whose timelike or spacelike geodesics are closed. We will consider the case of closed  
spacelike geodesics only, since the other case follows by considering $(M,-g)$ instead of $(M,g)$.

By  Theorem \ref{T1.1} and  Proposition \ref{P2.1} the Riemannian arclength of the  spacelike geodesics is locally bounded. Therefore all closed 
spacelike geodesics of the common period are homotopic as loops. Denote this homotopy class with $\sigma$. 

Next note that every smooth non-self-intersecting compact  spacelike curve can be extended to a smooth  spacelike $1$-foliation. 
Furthermore, by imitating the end of the proof of the non compact case, we see that there exists a finite covering $\widehat M$  of $M$  that contains simply 
closed spacelike geodesics. We choose a smooth spacelike 
simply closed geodesic $\eta$ on $\widehat M$. 
Extend $\eta$ to a  spacelike foliation $\F_{space}$ of $\widehat M$.

Now it is easy to see that every spacelike geodesic  $\gamma$ intersecting $\eta$ is tangent at one point to $\F_{space}$. Lift  $\gamma$ and $\F_{space}$ to the universal 
cover.  Denote the lifts with $\widetilde\gamma$, $\widetilde{\eta}$ and $\widetilde{\F}_{space}$. Consider the 
function along $\widetilde\gamma$ that associate to  every point the Lorentzian angle between $\dot {\widetilde \gamma}(t)$ and $X(\widetilde \gamma(t))$ ($X$ 
a given unit vector field tangent to $\widetilde{\F}_{space}$). By construction  $\widetilde\gamma$ and $\widetilde\eta$ 
will intersect at least twice. But since the universal cover is homeomorphic to $\R^2$, the angle at two consecutive intersections has to change sign 
($\widetilde\eta$ disconnects the universal cover). Therefore the angle vanishes somewhere, i.e.\@ $\gamma$ is tangent to $\F_{space}$.

Note that the set of unit tangents $T^1\F_{space}$ to $\F_{space}$ forms a compact subset of the  spacelike vectors in $TM$. Hence, by  Theorem \ref{T1.1}, the set of unit spacelike geodesics intersecting $\F_{space}$ is a compact subset of the tangent bundle of $M$.  
But, we just proved that this set contains all the unit spacelike geodesics that intersect $\eta$ and this set is not contained in a compact subset of $TM$ (it contains the set of unit vectors above any point of $\eta$).


\end{proof}

With  Theorem \ref{T2.1} we can now answer the question of Lorentzian surfaces all of whose geodesics are closed.

\begin{theoremloc}\label{nozollsurface}
Every Lorentzian surface contains a nonclosed spacelike or timelike geodesic. Especially, there does not exist any Lorentzian surface all of whose geodesics are 
closed.
\end{theoremloc}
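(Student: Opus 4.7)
Assume for contradiction that $(M,g)$ is a Lorentzian surface on which every spacelike and every timelike geodesic is closed. The plan is to reduce, via Theorem~\ref{T2.1}, to a configuration where Lemma~\ref{L2.0} produces the contradiction: we will exhibit a spacelike closed curve and a timelike closed curve that are homotopic and meet at a common point, which Lemma~\ref{L2.0} forbids. First we apply Theorem~\ref{T2.1} to the spacelike hypothesis to obtain a finite covering $\pi\colon\widehat M\to M$ with $\widehat M\cong S^1\times\mathbb R$ carrying a spacelike-SC metric. Because $\pi$ has finite fibres and the projection of any timelike geodesic of $\widehat M$ is a closed timelike geodesic of $M$, every timelike geodesic of $\widehat M$ is itself closed. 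At any chosen basepoint $p\in\widehat M$ we then pick a simple closed spacelike geodesic $\gamma$ through $p$ (from the spacelike-SC property in any spacelike direction at $p$) and a closed timelike geodesic $\eta$ through $p$.

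The next step is to analyse the fundamental-group classes. Writing $\pi_1(\widehat M)=\mathbb Z=\langle\tau\rangle$, the non-compact case in the proof of Theorem~\ref{T2.1} shows that $[\gamma]$ generates $\pi_1(\widehat M)$, so $[\gamma]=\pm\tau$; since no non-lightlike closed curve on a Lorentzian surface is contractible, we also have $[\eta]=m\tau$ for some nonzero integer $m$. After reversing orientations if necessary we arrange $[\gamma]=\tau$ and $m>0$, and form the iterate $\gamma^m\colon S^1\to\widehat M$, $s\mapsto\gamma(ms)$. This iterate is smooth, regular, spacelike, passes through $p$, and represents the class $m\tau=[\eta]$. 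Consequently $\gamma^m$ and $\eta$ are freely homotopic, intersect at $p$, and have definite but different causal types. Lemma~\ref{L2.0} applied to this pair would then assert that they share a causal type, the desired contradiction.

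The subtle point---and the main obstacle---is that $\gamma^m$ is typically not simple when $m>1$, so Lemma~\ref{L2.0} cannot be cited as a black box. The claim will be that the argument in the proof of Lemma~\ref{L2.0} nevertheless applies without modification: the lift of $\gamma^m$ to the universal cover $\widetilde M\cong\mathbb R^2$ is the concatenation of $m$ consecutive deck-translates of the embedded lift of $\gamma$, and those translates meet only at their common endpoints. The concatenated lift is therefore globally embedded, so the bigon argument comparing $\widetilde{\gamma^m}$ with a lift of $\eta$ starting at the same basepoint goes through verbatim, producing the required contradiction from the forced alternation of orientations at consecutive intersection points.
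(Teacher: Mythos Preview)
Your argument is correct and follows essentially the same route as the paper: reduce via Theorem~\ref{T2.1} to $S^1\times\R$, produce intersecting closed spacelike and timelike curves in the same free homotopy class by iterating, and contradict Lemma~\ref{L2.0}. Your extra care---passing explicitly to the finite cover and checking that timelike geodesics remain closed there, and verifying that the lift of $\gamma^m$ is embedded---is justified but not strictly necessary: Lemma~\ref{L2.0} as stated does not require the curves to be simple, and its proof already establishes (via Poincar\'e--Bendixson) that any spacelike or timelike lift to the simply connected cover is embedded, so iterates pose no obstacle.
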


\begin{proof}
Any Lorentzian surface, all of whose spacelike geodesics are closed, is diffeomorphic to $S^1\times\R$ by  Theorem \ref{T2.1}. Now if there exists a Lorentzian 
surface all of time- and spacelike geodesics are closed, there would exist intersecting closed geodesics in the same fundamental class (Recall that no closed 
time- or spacelike curves is contractible in a Lorentzian surface). Simply consider iterates of closed geodesics if necessary. But this is
impossible by  Lemma \ref{L2.0}. 
\end{proof}

We close these notes with some comments on the case that all lightlike geodesics are closed and the case that all geodesics are closed.  In \cite{guill1} a compact 
pseudo-Riemannian manifolds $(M,g)$ is called ``Zollfrei'' if  the geodesic flow restricted to the set $H:=\{g(v,v)=0, v\neq 0\}$ forms a fibration of $H$ by $S^1$'s. It 
is equivalent to the existence of a free action of $S^1$ on $H$ whose orbits are the integral curves of the geodesic flow. For a Lorentzian surface, being Zollfrei is 
equivalent (up to a finite cover) to having closed lightlike geodesics. The Zollfrei metrics on surfaces are given by the following theorem. This result extends a 
theorem of Guillemin (see \cite{guill1} p.4) where the manifold is supposed to be compact.

\begin{theoremloc}\label{T2.2}
Let $g_{E}$ be the Einstein metric on $S^1\times S^1$, i.e. the metric $g_{S^1}-g_{S^1}$.
Let $(X,g)$ be any Lorentzian surface all of whose lightlike geodesics are closed. Then there exists a covering map $p\colon S^1\times S^1\rightarrow X$ such 
that $p^\ast g$ and $g_E$ are conformally equivalent. In particular $X$ is compact.
\end{theoremloc}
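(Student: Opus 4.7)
My approach is to move to the universal cover, set up global null coordinates identifying it with $\R^2$, and show the deck group contains a $\Z^2$ of pure translations.

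After passing to a finite cover of degree at most four, the surface $X$ becomes orientable and time-orientable with both lightlike line fields globally oriented, yielding two transverse oriented smooth $1$-foliations $\F_+, \F_-$ whose leaves are exactly the unparametrized lightlike geodesics, all circles by hypothesis. The universal cover $\widetilde X$ is diffeomorphic to $\R^2$ (the sphere $S^2$ is excluded because it admits no nowhere vanishing line field). I claim each leaf of the lifted foliations $\widetilde{\F}_\pm$ is homeomorphic to $\R$: a closed leaf in the simply connected $\widetilde X$ would bound an embedded disk by the Jordan curve theorem, and the restriction of the transverse foliation to that disk would be a nowhere zero line field on a disk, contradicting Poincar\'e--Hopf. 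Two transverse $1$-foliations by properly embedded lines on $\R^2$ (with Hausdorff leaf spaces, which follows from properness of the lifted leaves) yield a global diffeomorphism $(u,v)\colon \widetilde X \xrightarrow{\sim} \R^2$ in which $\widetilde{\F}_+, \widetilde{\F}_-$ become the horizontal and vertical foliations; since $\partial_u, \partial_v$ are null, the lifted metric reads $\widetilde g = f(u,v)\, du\, dv$ with $f>0$ smooth.

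The deck group $\Gamma = \pi_1(X)$ acts on $\R^2$ preserving the ordered pair $(\widetilde{\F}_+, \widetilde{\F}_-)$, so every $\gamma \in \Gamma$ has the form $(u,v) \mapsto (A_\gamma(u), B_\gamma(v))$. For each $c$, the stabilizer in $\Gamma$ of the horizontal line $\{v = c\}$ acts freely and cocompactly on $\R$ via the $A$-component (its image downstairs is a circle), and is therefore infinite cyclic. Any free $\Z$-action on $\R$ is smoothly conjugate to the standard integer translation action, so after a global reparametrization $u \mapsto U(u)$---which only rescales $f$ and preserves the conformal class of $\widetilde g$---one such generator acts as $(u,v) \mapsto (u + T_+, B(v))$. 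A symmetric argument with $\widetilde{\F}_-$ yields another element acting as $(u,v) \mapsto (A(u), v + T_-)$ after reparametrizing $v$. A rigidity analysis combining these two steps---using crucially that \emph{every} leaf (not just one chosen leaf) is closed---allows one to arrange simultaneously $A \equiv \mathrm{id}$ and $B \equiv \mathrm{id}$, yielding a subgroup $\Lambda \cong \Z^2 \subset \Gamma$ of pure translations.

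The chain of coverings $\R^2 \to \R^2/\Lambda = S^1 \times S^1 \to \R^2/\Gamma = X$ then produces the desired covering map $p\colon S^1 \times S^1 \to X$; since $\Gamma/\Lambda$ must be finite, $X$ is compact. The pullback $p^\ast g = f\, du\, dv$ is conformally equivalent to $du\, dv$, and since $g_E = dx^2 - dy^2$ takes exactly the form $du\, dv$ in the null coordinates $u = x - y$, $v = x + y$, conformal equivalence of $p^\ast g$ with $g_E$ follows. The principal obstacle is the rigidity step in the deck group analysis: the ``twist'' components of the two chosen generators must be simultaneously trivialized by a single global reparametrization of $(u,v)$, which is not automatic from considering individual leaves and requires the full strength of the closed-leaf hypothesis.
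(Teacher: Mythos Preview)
Your overall strategy---lifting to the universal cover, setting up global null coordinates, and analysing the deck group---is a reasonable alternative to the paper's route, but the proof has a genuine gap precisely where you yourself flag it. The paper proceeds quite differently: after passing to the finite cover $\widehat X$ with orientable null foliations $\F_1,\F_2$, it observes that $\pi\colon\widehat X\to\widehat X/\F_1$ is a smooth $S^1$-fibration (since $\F_1$ is a transversely orientable codimension-one foliation by circles), and then uses that any leaf of $\F_2$ is compact and transverse to the fibres to conclude that $\widehat X/\F_1$ is a circle. This immediately gives $\widehat X$ the structure of a foliated $S^1$-bundle over $S^1$ with holonomy a single diffeomorphism $\phi\in\mathrm{Diff}(S^1)$; the closedness of all $\F_2$-leaves forces $\phi$ to have finite order $k$, and the further $k$-fold cover trivialises the bundle as a product of circles. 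No rigidity analysis on $\R^2$ is needed.

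In your argument the analogous step---your ``rigidity analysis'' showing that the twist components $A,B$ of two deck transformations can be simultaneously trivialised---is asserted but not carried out, and you explicitly identify it as the principal obstacle. This is the heart of the matter: it is equivalent to the finite-order statement for $\phi$ above, and it does not follow from soft considerations about individual leaves. A second gap is the claim that $\Gamma/\Lambda$ is finite: you have not yet shown $X$ is compact, so there is no a~priori reason the translation lattice $\Lambda$ you produce has finite index in $\Gamma$. (Relatedly, you would also need $\Lambda$ to be normal in $\Gamma$, i.e.\ that all $A_\gamma,B_\gamma$ commute with the translations, which is not automatic.) Finally, the passage to global coordinates~(Step~4) is a little quick: properness of individual leaves does not by itself give Hausdorff leaf spaces on $\R^2$, and you need an argument (e.g.\ via transversality and the bigon/orientation trick of Lemma~\ref{L2.0}) that every $\widetilde\F_+$-leaf meets every $\widetilde\F_-$-leaf exactly once. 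The paper's foliated-bundle argument avoids all three of these issues in one stroke.
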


\begin{proof}
Let $(X,g)$ be a  Zollfrei surface. There exists a finite covering $p_0: \widehat X\rightarrow X$ such that $p_0^*g$ has two orientable lightlike linefields. We denote by $\F_1$ and $\F_2$ the foliations generated by those linefields. As $\F_1$ is a transversaly oriented codimension $1$ foliation by circles, 
the map $\pi\colon\widehat X\rightarrow \widehat X/\F_1$ is a smooth fibration. Let $F$ be a leaf of $\F_2$. 
The leaf $F$ is compact therefore $\pi|_{F}$  too is a covering  of its image. It implies that $\pi(F)$ is a circle. Hence, $\widehat X/\F_1$ is also a 
circle and $\widehat X$ has to be a torus.

Thus, $\widehat X$ is a compact foliated bundle with vertical foliation $\F_1$ and with horizontal foliation $\F_2$. 
Otherwise said, there exist a diffeomorphism of $S^1$ of finite order $k$, denoted by $\phi$, and  a diffeomorphism $X\rightarrow (S^1\times \R) / G$,  where $G$ is the group generated by the map $f\colon (x,t)\mapsto (\phi(x),y+1)$, that sends the foliations $\F_{1,2}$  to the horizontal  resp. vertical foliation of  $(S^1\times \R) / G$.
Let $\Gamma$ be the subgroup of $G$ generated by $f^k$, $\widetilde X=(S^1\times \R) / \Gamma$ and $p$ be the covering $\widetilde X\rightarrow \widehat X$.  Clearly, the lifts to $\widetilde X$ of the foliations $\F_i$ form a product. It means that there exists a diffeomorphism $h\colon \widetilde X\rightarrow  S^1\times S^1$ such that $h^*g_E$ and $p^*(p_0^*g)$ are conformal.
\end{proof}

\begin{remarklocstar}
We could combine  Theorem \ref{nozollsurface} and Theorem \ref{T2.2} to the result stating: If all geodesics of one causal type (timelike, spacelike, lightlike) are 
closed, then all geodesics of all other types are non-closed.
\end{remarklocstar}

\end{document}